\newdimen\bibspace
\renewenvironment{thebibliography}[1]{%
 \section*{\refname %or \bibname if you use ``book'' as the documentclass
       \@mkboth{\MakeUppercase\refname}{\MakeUppercase\refname}}%
     \list{\@biblabel{\@arabic\c@enumiv}}%
          {\settowidth\labelwidth{\@biblabel{#1}}%
           \leftmargin\labelwidth
           \advance\leftmargin\labelsep
           \itemsep\bibspace
           \parsep\z@skip     %
           \@openbib@code
           \usecounter{enumiv}%
           \let\p@enumiv\@empty
           \renewcommand\theenumiv{\@arabic\c@enumiv}}%
     \sloppy\clubpenalty4000\widowpenalty4000%
     \sfcode`\.\@m}
    {\def\@noitemerr
      {\@latex@warning{Empty `thebibliography' environment}}%
     \endlist}
\newtheorem{thm}{Theorem}[section]
\newtheorem{lem}[thm]{Lemma}
\newtheorem{prop}[thm]{Proposition}
\newtheorem{cor}[thm]{Corollary}
\newtheorem{rem}[thm]{Remark}
\numberwithin{equation}{section}
\def\XXint#1#2#3{{\setbox0=\hbox{$#1{#2#3}{\int}$}
  \vcenter{\hbox{$#2#3$}}\kern-.5\wd0}}
\newcommand{\al}{\alpha}                \newcommand{\lda}{\lambda}
                \newcommand{\pa}{\partial}
\newcommand{\va}{\varepsilon}           \newcommand{\ud}{\mathrm{d}}
\newcommand{\be}{\begin{equation}}      \newcommand{\ee}{\end{equation}}
\newcommand{\Lda}{\Lambda}              
\newcommand{\R}{\mathbb{R}}              
\begin{document}

\title{\textbf{Schauder estimates for solutions of linear parabolic integro-differential equations}
\bigskip}

\author{\medskip Tianling Jin\quad and \quad Jingang Xiong }

\date{\today}

\maketitle

\begin{abstract}

We prove optimal pointwise Schauder estimates in the spatial variables for solutions of linear parabolic integro-differential equations. Optimal H\"older estimates in space-time for those spatial derivatives are also obtained.
\end{abstract}

%\tableofcontents

\section{Introduction}

Integro-differential equations appear naturally when studying discontinuous stochastic process.  In a series of papers of Caffarelli-Silvestre \cite{CS09, CS10, CS11}, regularities of solutions of fully nonlinear integro-differential elliptic equations such as H\"older estimates, Cordes-Nirenberg type estimates and Evans-Krylov theorem were established. Regularity for parabolic integro-differential equations has been also studied, e.g., in \cite{CCV, LD1,LD2, FK, KS, JS} and many others. In this paper, we prove optimal pointwise Schauder estimates in the spatial variables for solutions of linear parabolic integro-differential equations. In general, we can not expect any interior continuity of the derivative of local solutions in the time variable even for the fractional heat equation $u_t+(-\Delta)^{\sigma/2} u=0$ without extra assumptions; see example 2.4.1 in \cite{LD2}.

We consider the linear parabolic integro-differential equation
\be \label{eq:linear}
u_t(x,t)-L u(x,t)= f(x,t) \quad \mbox{in }B_5\times(-5^\sigma,0],
\ee
where 
\be\label{eq:linear operator}
L u(x):=\int_{\R^n} \delta u(x,y;t)K(x,y;t)\,\ud y,
\ee
$\delta u(x,y;t)=u(x+y,t)+u(x-y,t)-2u(x,t)$ and $K(x,y;t)$ is a positive kernel.

We will restrict our attention to symmetric kernels which satisfy
\be\label{eq:symmetry}
K(x,y;t)=K(x,-y;t).
\ee
This assumption is somewhat implicit in the expression \eqref{eq:linear}. We also assume that the kernels are uniformly elliptic
\be\label{eq:elliptic}
\frac{(2-\sigma)\lda }{|y|^{n+\sigma}} \leq K(x,y;t)\leq \frac{(2-\sigma)\Lda }{|y|^{n+\sigma}}
\ee
for some $\sigma\in (0,2)$, $0<\lda\le\Lda<\infty$, which is an essential assumption leading to local regularizations. Finally,  we suppose that the kernels are $C^1$ away from the origin and satisfy
\be\label{eq:kernel smooth1}
|\nabla_y K(x,y;t)|\leq \frac{\Lda}{|y|^{n+\sigma+1}},
\ee
and in certain cases we assume more that the kernels are $C^2$ away  from the origin and satisfy
\be\label{eq:kernel smooth2}
|\nabla^2_y K(x,y;t)|\leq \frac{\Lda}{|y|^{n+\sigma+2}}.
\ee
These smoothness assumptions are usually used to reduce the influence of the boundary data in the exterior domain, and one of the consequences is that the solutions of translation invariant (or ``constant coefficients") equations will have high regularity. Moreover, the conditions \eqref{eq:kernel smooth1} and \eqref{eq:kernel smooth2} are scaling invariant, which will be used in our perturbative arguments.  We say that a kernel $K\in \mathscr{L}_0(\lda, \Lda, \sigma)$ if $K$ satisfies \eqref{eq:symmetry} and \eqref{eq:elliptic}, and $K\in \mathscr{L}_1(\lda, \Lda, \sigma)$ if $K$ satisfies \eqref{eq:symmetry}, \eqref{eq:elliptic} and \eqref{eq:kernel smooth1}. If in addition that $K$ satisfies \eqref{eq:kernel smooth2}, then we say that
 $K\in \mathscr{L}_2(\lda, \Lda, \sigma)$. 

In this paper, all the solutions of nonlocal equations are understood in the viscosity sense, where the definitions of such solutions and their many properties can be found in \cite{CS09} for elliptic equations and in \cite{LD1} for parabolic equations. One may also consider \emph{a priori} estimates for solutions of \eqref{eq:linear}, i.e., assuming a smooth function $u$ satisfies \eqref{eq:linear}. To obtain pointwise Schauder estimates for solutions of \eqref{eq:linear} at $x=0$, we assume that the kernel satisfies
\be\label{eq:holder K} 
\int_{\R^n}|K(x,y;t)-K(0,y;0)|\min(|y|^2,r^2)dy \le \Lambda (|x|^{\alpha}+|t|^{\frac{\al}{\sigma}}) r^{2-\sigma} 
\ee
for all $r\in (0, 1]$, $(x,t)\in B_5\times(-5^\sigma,0]$. \eqref{eq:holder K} means that $K$ is H\"older continuous at $(x,t)=(0,0)$ in some integral sense.
If $|K(x,y;t)-K(0,y;0)|\le \Lda(2-\sigma) (|x|^{\alpha}+|t|^{\frac{\al}{\sigma}}) |y|^{-n-\sigma}$, then one can check that \eqref{eq:holder K} holds. Meanwhile, we also assume that the right-hand side $f(x,t)$ is  H\"older continuous at $(x,t)=(0,0)$, i.e.,
\be\label{eq:holder f}
|f(x,t)-f(0,0)|\le M_f (|x|^\al+|t|^\frac{\al}{\sigma})\quad \mbox{and }|f(x,t)|\le M_f
\ee
for all $\ (x,t)\in B_5\times(-5^\sigma,0]$ with some nonnegative constant $M_f$.

For a real number $s$, $[s]$ denotes the largest integer which is less than or equals to $s$.  Our main result  is the following optimal pointwise Schauder estimate in spatial variables for solutions  of \eqref{eq:linear} with $K\in \mathscr{L}_2(\lda, \Lda, \sigma)$.

\begin{thm}\label{thm:linear schauder}
Let $K\in \mathscr{L}_2(\lda, \Lda, \sigma)$ with $2>\sigma\ge\sigma_0>0$. Let $\al\in (0,1)$ such that $|\sigma+\al-j|\ge\va_0$ for some $\va_0>0$, where $j=1,2,3$. Suppose that \eqref{eq:holder K} and \eqref{eq:holder f} hold. If $u$ is a viscosity solution of \eqref{eq:linear}, then there exists a polynomial $P(x)$ of degree $[\sigma+\al]$ such that for $x\in B_1$
\be\label{eq:optimal linear schauder}
\begin{split}
|u(x,0)-P(x)|&\leq C\left(\|u\|_{L^\infty(\R^n\times(-5^\sigma,0])}+ M_f\right)|x|^{\sigma+\al};\\
|\nabla^j P(0)|&\leq C\left(\|u\|_{L^\infty(\R^n\times(-5^\sigma,0])}+ M_f\right),\ j=0,\cdots, [\sigma+\al],
\end{split}
\ee
where $C$ is a positive constant depending only on $\lda,\Lda, n, \sigma_0, \al$ and $\va_0$.
\end{thm}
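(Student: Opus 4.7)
The strategy is the standard Caffarelli-style iterative polynomial approximation at geometrically decreasing parabolic scales, adapted to the nonlocal setting. The key input is an interior smoothness estimate for the ``frozen'' equation $h_t - L_0 h = c$ in $Q_1 := B_1\times(-1,0]$, where $L_0$ has kernel $K(0,y;0)\in\mathscr{L}_2(\lda,\Lda,\sigma)$: because $L_0$ is translation invariant in both $x$ and $t$, differentiating the equation in $x$ and using the scaling-invariant $\mathscr{L}_2$ hypothesis yields, for bounded solutions, a spatial Taylor polynomial $Q$ of degree $[\sigma+\al]$ at the origin with
$$
|h(x,0)-Q(x)|\le C\|h\|_{L^\infty(Q_1)}|x|^{\sigma+\al+\gamma},\qquad x\in B_{1/2},
$$
for some universal $\gamma>0$. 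The condition $|\sigma+\al-j|\ge\va_0$ for $j=1,2,3$ serves precisely to stay uniformly away from the integer thresholds at which this smooth estimate would fail.

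The induction produces, at scales $r_k=\rho^k$ for a small universal $\rho\in(0,1)$, polynomials $P_k(x)$ of degree $[\sigma+\al]$ satisfying
$$
\sup_{B_{r_k}}|u(x,0)-P_k(x)|\le C_\ast r_k^{\sigma+\al},\qquad |\nabla^j(P_{k+1}-P_k)(0)|\le C_\ast r_k^{\sigma+\al-j},
$$
with $C_\ast \simeq \|u\|_{L^\infty(\R^n\times(-5^\sigma,0])}+M_f$. Summing the geometric series in $k$ then yields a limit polynomial $P$ obeying \eqref{eq:optimal linear schauder}. The induction step starts by rescaling
$$
v(x,t) := \frac{u(r_kx,r_k^\sigma t)-P_k(r_kx)}{r_k^{\sigma+\al}}
$$
on the cylinder $Q_1$. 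By the scale invariance of the class $\mathscr{L}_2$, the rescaled kernel $\tilde K(x,y;t)=r_k^{n+\sigma}K(r_kx,r_ky;r_k^\sigma t)$ remains in $\mathscr{L}_2(\lda,\Lda,\sigma)$, and \eqref{eq:holder K}, \eqref{eq:holder f} force the rescaled equation to be $r_k^\al$-close (in the integral sense of \eqref{eq:holder K}) to a frozen-coefficient problem with right-hand side $f(0,0)$ and kernel $K(0,y;0)$.

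Now let $h$ solve the frozen problem $h_t-L_0 h= r_k^{-\al}f(0,0)$ on $Q_1$ with the same parabolic exterior and initial data as $v$. The difference $w:=v-h$ satisfies a nonlocal parabolic equation majorized by an extremal operator $\M^+_{\mathscr{L}_0}$ with source term bounded by $Cr_k^\al(\|v\|_{L^\infty}+1)$, the source coming from \eqref{eq:holder K} (applied against the frozen interior regularity bound on $h$) and from \eqref{eq:holder f}. An ABP / maximum-principle argument for parabolic nonlocal equations, in the spirit of \cite{CS09,LD1}, then yields $\|w\|_{L^\infty(Q_1)}\le Cr_k^\al$. Applying the frozen regularity lemma to $h$ furnishes a polynomial $Q$ of degree $[\sigma+\al]$ with $|h(x,0)-Q(x)|\le C|x|^{\sigma+\al+\gamma}$; combining with the bound on $w$ and specializing to $|x|\le\rho$ gives $|v(x,0)-Q(x)|\le C(r_k^\al+\rho^{\sigma+\al+\gamma})$. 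Choosing $\rho$ small universal so that $C\rho^\gamma\le 1/2$ and then unscaling, we set $P_{k+1}(x):=P_k(x)+r_k^{\sigma+\al}Q(x/r_k)$, which verifies both lines of the inductive hypothesis, the coefficient bounds coming from universal interior estimates on $Q$.

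The main obstacle is that $P_k$ is a polynomial, so $u-P_k$ is neither bounded nor integrable on $\R^n$ and hence $L(u-P_k)$ is not literally defined. The fix is to work with truncated versions $P_k\chi_k$ (for cutoffs $\chi_k$ at scale $\sim r_k$) and to absorb the resulting tail contributions; this is where the $\mathscr{L}_2$ smoothness \eqref{eq:kernel smooth1}-\eqref{eq:kernel smooth2}, together with its scaling invariance, is used most substantially, and it is the reason the inductive hypothesis must be augmented by weighted tail bounds outside $B_{r_k}$. A related subtlety is that the coefficient estimate $|\nabla^j(P_{k+1}-P_k)(0)|\le C_\ast r_k^{\sigma+\al-j}$ must be read off from the universal interior bound on $Q$ at each step, so that the telescoping series $\sum_k\nabla^j(P_{k+1}-P_k)(0)$ converges absolutely and delivers the second line of \eqref{eq:optimal linear schauder}.
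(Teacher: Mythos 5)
You have the right overall architecture (geometric iteration at parabolic scales, frozen translation-invariant equations as the regular model, the role of $|\sigma+\al-j|\ge\va_0$ in keeping constants bounded), and you correctly identify the main obstacle: a quadratic polynomial $P_k$ is not integrable against a kernel of order $\sigma<2$, so $L(u-P_k)$ is simply undefined and the rescaled error $v=(u-P_k(r_k\cdot))/r_k^{\sigma+\al}$ is unbounded at infinity. However, your proposed fix---truncate $P_k$ by cutoffs and carry ``weighted tail bounds outside $B_{r_k}$'' through the induction---is precisely the route the paper explicitly rules out for optimality. The introduction states that controlling these tails is what causes ``a slight loss of regularity'' when $\sigma+\al>1$ with $\sigma<1$ and when $\sigma+\al>2$, and that the whole point of the paper's construction is to sidestep the issue. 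Indeed, a direct accounting shows the tail error from a truncated quadratic scales like $r_k^{-\al}$ (the quadratic part of $\delta P_k$ integrated over $1<|y|<r_k^{-1}$ against $|y|^{-n-\sigma}$ gives $r_k^{\sigma-2}$, times the $r_k^{2-\sigma-\al}$ amplitude of the rescaled quadratic), which destroys the iteration rather than being absorbable.

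The paper's resolution is to \emph{not} pass to the Taylor polynomial of the frozen solution at each step. Instead, it inductively constructs corrections $w_i$ so that $\sum_{l=0}^{i}w_l$ solves the frozen problem $\pa_t-L_0=f(0,0)$ on $B_{4\cdot 5^{-i}}\times(-4^\sigma 5^{-i\sigma},0]$ with boundary/initial data equal to $u$ itself, see \eqref{eq:for k}--\eqref{eq:zero outside}. The rescaled error $W=5^{(\sigma+\al)(i+1)}(u-\sum_{l\le i}w_l)(5^{-(i+1)}\cdot,5^{-(i+1)\sigma}\cdot)$ is then bounded and \emph{compactly supported} (it vanishes outside $B_{20}$), so the nonlocal operator acts on it cleanly, with no tail to control. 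High regularity of the $w_i$ comes from Propositions~\ref{lem:high regularity} and~\ref{lem:high regularity-1}, yielding the derivative bounds \eqref{eq:est for k}; only at the very end are the Taylor coefficients summed to produce $P$. This is the Caffarelli / Li--Nirenberg idea of approximating by solutions rather than polynomials, and it is what makes the estimate optimal rather than suboptimal by some $\gamma>0$. Two further points you should address. First, the comparison step is not an ABP argument: the paper uses compactness-based approximation lemmas (Lemmas~\ref{lem:appr0}, \ref{lem:appr}), which is why the operator closeness is measured in the weak norm $\|\cdot\|_*$ built from \eqref{eq:holder K}, and why the induction must also carry a H\"older seminorm bound \eqref{eq:gradient close app} on the rescaled error, not just a sup bound---Lemma~\ref{lem:appr} requires an a priori $C^\beta$ estimate on $W$. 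Second, the ``frozen regularity lemma'' you invoke with a gain $\gamma>0$ is available (Propositions~\ref{lem:high regularity}, \ref{lem:high regularity-1} give $C^{\sigma+i}$ for $K\in\mathscr L_i$), but the extra $\gamma$ is not actually used in the paper's scheme; instead, uniform $\nabla^j w_l$ bounds with the exact exponents $\sigma+\al-j$ are summed directly.
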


The constant $C$ in \eqref{eq:optimal linear schauder} does not depend on $\sigma$, and thus, does not blow up as $\sigma\to 2$. But it blows up as $\sigma+\al$ approaching to integers. The condition that $\sigma+\al$ is not an integer is necessary 
even for (elliptic) fractional Laplacian equation $(-\Delta)^{\sigma/2}u=f$; see, e.g., Chapter V in \cite{stein}.

 Various Schauder estimates for solutions of some linear elliptic nonlocal equations were obtained before in, e.g., \cite{BFV, BR, DK, KD} and global Schauder estimates for some  linear parabolic nonlocal equations with non-symmetric kernels were obtained in \cite{MP} using probabilistic arguments, compared to which a feature of our estimate \eqref{eq:optimal linear schauder} in Theorem \ref{thm:linear schauder} is that the solution $u$ of \eqref{eq:linear} is precisely of $C^{\sigma+\al}$ at $x=0$ provided $f$ is $C^{\al}$ at $x=0$. 
 
 In the case of second order parabolic equations, if the coefficients are of $C^\al_x$ in $x$ and only measurable in the time variable, then for a solution $u$ of such equations, its second order spatial derivatives $\nabla^2_x u$ are of $C^{\al,\al/2}_{x,t}$. Such results and related ones can be found in, e.g., \cite{BA,DSK,Knerr,KP,GL,LL,TW}. Similar optimal interior H\"older estimates in space-time for spatial derivatives of solutions of \eqref{eq:linear} will  follow from Theorem \ref{thm:linear schauder}; see Corollary \ref{cor:interior schauder} and Corollary \ref{cor:gradient holder} in Section \ref{sec:holder spatial derivatives}.
 In Theorem \ref{thm:linear schauder}, we require that $K$ and $f$ have regularity in $t$ at $t=0$ as well, which is needed in our compactness arguments for weak limits of nonlocal parabolic operators.

One common difficulty in approximation arguments to obtain regularities of solutions of nonlocal equations is to control the error of the tails at infinity, which results in a slight loss of regularity compared to second order equations, especially in the case when $\sigma+\al>1$ with $\sigma<1$, and in the case $\sigma+\al>2$. In this paper, we will approximate the genuine solution by solutions of ``constant coefficients" equations instead of polynomials, which is inspired by \cite{Ca,LN}. In this way, we do not need to take care of the tails at infinity, that leads to the optimal regularity. The only place where \eqref{eq:kernel smooth1} or \eqref{eq:kernel smooth2} is used is to obtain higher regularity of solutions of those corresponding ``constant coefficients" equations. 

In the following section, we prove the optimal pointwise Schauder estimates \eqref{eq:optimal linear schauder}. We first establish high regularity for solutions of translation invariant equations in Section \ref{sec:1}, which is the only place that we require $K$ is $C^2$ away from the origin especially for $\sigma+\al\in (2,3)$.   In Section \ref{sec:2} we use perturbative arguments to prove Theorem \ref{thm:linear schauder}. Section \ref{sec:holder spatial derivatives} is on the H\"older estimates in space-time for those spatial derivatives. In the Appendix, we recall some definitions and notions of nonlocal operators from \cite{CS10, LD2}, and establish two approximation lemmas for our own purposes, which are variants of those in \cite{CS10, LD2}.

\bigskip

\noindent\textbf{Acknowledgements:} We would like to thank Professor Luis Silvestre for many useful discussions and suggestions. We also thank Professor YanYan Li for his interests and constant encouragement. Tianling Jin was supported in part by NSF grant DMS-1362525. Jingang Xiong was supported in part by the First Class Postdoctoral Science Foundation of China (No. 2012M520002) and Beijing Municipal Commission of Education for the Supervisor of Excellent Doctoral Dissertation (20131002701).

\section{Optimal pointwise Schauder estimates in spatial variables}\label{sec:linear}

\subsection{Translation invariant equations}\label{sec:1}

In this section, we first establish good regularity on the solutions of  translation invariant equations, which is similar to ``constant coefficients" equation in the case of second order equations. 
\begin{prop} \label{lem:high regularity}
Suppose the kernel $K(y)\in \mathscr{L}_i(\lda, \Lda, \sigma)$ with $2-\sigma_2\ge\sigma\ge\sigma_1>1$ for some $\sigma_2>0$, where $i=1$ or $2$. If $v$ is a viscosity solution of
\[
v_t(x,t)-\int_{\R^n} \delta v(x,y;t)K(y)\,\ud y= g(x,t) \quad \mbox{in }B_8\times (-8^\sigma,0],
\]
where $g(\cdot, t)\in C_x^i(B_8)$ for all $t\in (-8^\sigma,0]$, then there exists a positive constant $ c_1$ depending only on $n,\lda, \Lda, \sigma_1$  such that
\be\label{eq:C2 est0}
\begin{split}
\sup_{t\in (-1,0)}\| v(\cdot, t)\|_{C^{1+i} (B_1)}\leq c_1(\|v\|_{L^\infty(\R^n\times (-8^\sigma,0])}+\sup_{t\in (-8^\sigma,0)}\|g(\cdot, t)\|_{C_x^{i}(B_8)});\\
\end{split}
\ee
and there exists another positive constant $ c_2$ depending only on $n,\lda, \Lda, \sigma_1,\sigma_2$  such that
\be\label{eq:C2 est00}
\begin{split}
\sup_{t\in (-1,0)}\| v(\cdot, t)\|_{C^{\sigma+i} (B_1)}\leq c_2(\|v\|_{L^\infty(\R^n\times (-8^\sigma,0])}+\sup_{t\in (-8^\sigma,0)}\|g(\cdot, t)\|_{C_x^{i}(B_8)}).\\
\end{split}
\ee
\end{prop}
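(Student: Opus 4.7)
The plan is to exploit translation invariance of $L$ (a consequence of $K=K(y)$) in order to differentiate the equation in $x$: every spatial derivative $\partial_x^\beta v$ with $|\beta|\le i$ will again solve the same equation with $\partial_x^\beta g$ in place of $g$, and then a sharp interior H\"older estimate in $x$ applied to $\partial_x^i v$ reads off the $C^{1+i}_x$ and $C^{\sigma+i}_x$ bounds on $v$. Concretely, for a unit vector $e\in\R^n$ and $h\ne 0$ I would introduce the incremental quotients
\[
v^{e,h}(x,t):=\tfrac{v(x+he,t)-v(x,t)}{h},\qquad g^{e,h}(x,t):=\tfrac{g(x+he,t)-g(x,t)}{h},
\]
and observe that, by translation invariance in $x$, $v^{e,h}$ is a viscosity solution of $v^{e,h}_t-Lv^{e,h}=g^{e,h}$ on a slightly shrunk cylinder. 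Under the hypothesis $g(\cdot,t)\in C_x^i(B_8)$ the family $\{g^{e,h}\}_h$ is uniformly bounded; the interior $C^\alpha_x$ estimate for parabolic nonlocal equations with $L^\infty$ right-hand side (cf.\ \cite{LD1,LD2}) then supplies a uniform H\"older bound on $\{v^{e,h}\}_h$, and Arzel\`a--Ascoli together with the stability of viscosity solutions under locally uniform convergence (plus a tail estimate using $v\in L^\infty(\R^n\times(-8^\sigma,0])$) allows me to pass to the limit $h\to 0$ and identify $\partial_{x_k}v$ as a viscosity solution of $(\partial_{x_k}v)_t-L(\partial_{x_k}v)=\partial_{x_k}g$.

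I would then iterate this differentiation $i$ times, shrinking the cylinder at each step, so that each $\partial_x^\beta v$ with $|\beta|\le i$ is a viscosity solution of the corresponding equation with right-hand side $\partial_x^\beta g\in L^\infty$. At this stage the smoothness assumption $K\in\mathscr{L}_i$ enters the picture: it upgrades the plain $C^\alpha_x$ interior estimate to a $C^\sigma_x$ estimate for the translation invariant equation with $L^\infty$ right-hand side (and to a $C^{1+\alpha}_x$ variant whose constant stays finite as $\sigma\to 2$). Applying the $C^\sigma_x$ estimate to $\partial_x^\beta v$ with $|\beta|=i$ gives
\[
\sup_{t\in(-1,0)}\|\partial_x^\beta v(\cdot,t)\|_{C^\sigma(B_1)}\le c\bigl(\|v\|_{L^\infty(\R^n\times(-8^\sigma,0])}+\sup_t\|g(\cdot,t)\|_{C_x^i(B_8)}\bigr),
\]
which is precisely \eqref{eq:C2 est00}; the constant $c_2$ picks up a $\sigma_2$-dependence because the estimate degenerates as the exponent $\sigma+i$ approaches the upper integer $i+2$. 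The weaker estimate \eqref{eq:C2 est0} comes from the $C^{1+\alpha}_x$ variant, whose constant is stable as $\sigma\to 2$, so $c_1$ carries no $\sigma_2$-dependence.

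The main obstacles I anticipate are two. Rigorously closing the incremental-quotient step in the viscosity framework requires uniform equicontinuity of $\{v^{e,h}\}_h$ on the shrunk cylinder and convergence of the tails $\int_{|y|>r}\delta v^{e,h}(x,y;t)K(y)\,\ud y$ as $h\to 0$; both are delivered by the global $L^\infty$ bound on $v$ in $\R^n\times(-8^\sigma,0]$ together with the basic integrability of $K$, but the bookkeeping is fiddly. More substantively, proving the sharp H\"older estimate for the translation invariant equation with the claimed dependence on $\sigma_1$ (stability as $\sigma\to 1^+$, keeping us strictly above order $1$ so that $v$ is genuinely differentiable in $x$) and on $\sigma_2$ (degeneration as $\sigma\to 2^-$) is where the assumption $K\in\mathscr{L}_i$ really pays for itself---particularly the $C^2_y$ bound \eqref{eq:kernel smooth2} in the case $i=2$, which is what makes $C^{\sigma+2}$ regularity with exponent up to nearly $4$ accessible, plausibly by integrating by parts in $y$ to trade $y$-derivatives of $K$ for additional effective $x$-regularity of $v$.
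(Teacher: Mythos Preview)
Your strategy---differentiate in $x$ using translation invariance, then apply a sharp interior H\"older estimate---is the right shape, and is close in spirit to the paper's proof. But there is a real gap in the incremental-quotient step, and it is precisely the place where the assumption $K\in\mathscr{L}_i$ actually does its work, not merely at the final ``upgrade''.

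The interior H\"older estimate you cite from \cite{LD1,LD2} has the form
\[
\|w\|_{C^{\alpha}(Q_1)}\le C\Bigl(\|w\|_{L^\infty(\R^n\times(-2^\sigma,0])}+\|f\|_{L^\infty}\Bigr),
\]
so applying it to $w=v^{e,h}$ requires a uniform-in-$h$ bound on $v^{e,h}$ over \emph{all} of $\R^n$ (or at least in $L^1(\omega)$). You have no such bound: $v$ is only globally bounded, not globally $C^1$, so $\|v^{e,h}\|_{L^\infty(\R^n)}\le 2\|v\|_{L^\infty}/|h|$ blows up as $h\to 0$. The $L^\infty$ bound on $v$ alone cannot control the tail of $v^{e,h}$, either for the interior estimate or for the limit passage.

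The paper's remedy is to work not with $\nabla_x v$ but with $w_1=\nabla_x(\eta_1 v)$ for a cutoff $\eta_1$. Then $w_1$ is compactly supported, hence globally bounded, and its equation in $B_5$ reads
\[
\partial_t w_1-\int_{\R^n}\delta w_1(x,y;t)K(y)\,\ud y=-\int_{\R^n}\bigl((1-\eta_1)v\bigr)(x+y,t)\,\nabla_y K(y)\,\ud y+\nabla_x g.
\]
The extra tail term on the right is bounded by $\|v\|_{L^\infty}$ \emph{because} $|\nabla_y K(y)|\le\Lambda|y|^{-n-\sigma-1}$, i.e.\ because $K\in\mathscr{L}_1$. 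This is the integration by parts in $y$ you allude to at the end of your proposal, but it is needed at every differentiation step, not only for the sharp exponent. For $i=2$ one repeats with a second cutoff and picks up $\nabla_y^2 K$, which is where $\mathscr{L}_2$ is used.

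The second ingredient you treat as a black box---the ``$C^\sigma_x$ estimate for the translation invariant equation with $L^\infty$ right-hand side''---is in fact Lemma~\ref{lem:high regularity 0}, proved in the paper by an approximation/bootstrap argument (solve the homogeneous equation on dyadic cylinders, use the maximum principle to control successive differences, and sum). One first gets $C^{1+\beta}$ for $w_1$, which feeds back as the hypothesis $\bar\alpha=1$ of Lemma~\ref{lem:high regularity 0}, and a second application yields $C^{1+\min(\sigma-1,1)}=C^\sigma$. So the proof is genuinely a two-step bootstrap rather than a single citation.
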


This proposition will follow from the next lemma and standard integration by part techniques. 

\begin{lem} \label{lem:high regularity 0}
Let the kernel $K(y)\in \mathscr{L}_0(\lda, \Lda, \sigma)$ with $\sigma\ge\sigma_1>1$. Suppose that there exist two positive constants $ \tilde c$, and $ \bar\al\le 1$ satisfying $|\bar\al- \sigma+1|\ge\va_0$ for some $\va_0>0$, such that for every viscosity solution $u$ of
\[
u_t(x,t)-\int_{\R^n} \delta u(x,y;t)K(y)\,\ud y= 0 \quad \mbox{in }B_5\times (-5^\sigma,0],
\]
there holds
\be\label{eq:assumption c1 alpha}
\|\nabla_x u(\cdot, 0)\|_{C^{0,\bar\al} (B_1)}\leq \tilde c \|u\|_{L^\infty(\R^n\times (-5^\sigma,0])}.\\
\ee
Then there exists a positive constant $C$ depending only on $n,\lda, \Lda, \bar\al, \tilde c, \va_0$ and $\sigma_1$  such that for every viscosity solution $v$ of
\[
v_t(x,t)-\int_{\R^n} \delta v(x,y;t)K(y)\,\ud y= h(x,t) \quad \mbox{in }B_5\times (-5^\sigma,0],
\]
there holds
\be\label{eq:C1 alpha est0}
\begin{split}
\|\nabla_x v(\cdot, 0)\|_{C^{\beta} (B_1)}\leq C\left(\|v\|_{L^\infty(\R^n\times (-5^\sigma,0])}+\|h\|_{L^{\infty}(B_5\times(-5^\sigma,0])}\right),\\
\end{split}
\ee
where $\beta=\min(\sigma-1,\bar\al)$.
\end{lem}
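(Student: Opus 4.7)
My plan is to run a Caffarelli-style perturbative iteration at the origin; by the translation invariance of $L$, the pointwise estimate obtained at $x=0$ applies at every point of $B_1$ and yields the $C^\beta$ bound on $\nabla_x v(\cdot,0)$. After normalizing to $\|v\|_{L^\infty(\R^n\times(-5^\sigma,0])}+\|h\|_{L^\infty(B_5\times(-5^\sigma,0])}\le 1$, I will construct inductively, for a small $\rho\in(0,1)$ and every $k\ge 0$, affine polynomials $\ell_k(x)=a_k+b_k\cdot x$ satisfying
\[
\sup_{|x|\le\rho^k}|v(x,0)-\ell_k(x)|\le\rho^{k(1+\beta)},\qquad |a_{k+1}-a_k|+\rho^{k+1}|b_{k+1}-b_k|\lesssim\rho^{k(1+\beta)}.
\]
A standard geometric summation then yields an affine $\ell_\infty=\lim\ell_k$ with $|v(x,0)-\ell_\infty(x)|\le C|x|^{1+\beta}$ for $x$ near the origin, which is the desired pointwise $C^{1,\beta}$ control.

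For the induction step I will rescale by $v_k(x,t):=\rho^{-k(1+\beta)}\bigl(v(\rho^k x,\rho^{k\sigma}t)-\ell_k(\rho^k x)\bigr)$. Since $L$ annihilates affine functions and the class $\mathscr{L}_0(\lambda,\Lambda,\sigma)$ is invariant under the dilation $K(y)\mapsto\rho^{k(n+\sigma)}K(\rho^k y)$, $v_k$ satisfies an equation of the same form on a dilated cylinder, with kernel still in $\mathscr{L}_0$ and right-hand side of size $\|h_k\|_{L^\infty}\le\rho^{k(\sigma-1-\beta)}\le 1$. The inductive hypothesis gives $\|v_k(\cdot,0)\|_{L^\infty(B_1)}\le 1$, and a direct estimate using control on the previous coefficients $\{a_j,b_j\}_{j\le k}$ produces the tail bound $|v_k(x,t)|\le C(1+|x|^{1+\beta})$ on $\R^n\times(-5^\sigma,0]$, a growth rate compatible with the nonlocal integrals since $1+\beta<\sigma$.

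I will then invoke a variant of the approximation lemma from the Appendix (of the type used in \cite{CS10,LD2}, adapted to handle data with such polynomial growth at infinity) to produce a viscosity solution $w$ of the homogeneous equation $\partial_t w-L_k w=0$ in $B_5\times(-5^\sigma,0]$ with $\|v_k-w\|_{L^\infty(B_2\times(-2^\sigma,0])}\le\eta$, where $\eta$ can be made arbitrarily small by choosing the initial smallness of $\|h\|_{L^\infty}$ correspondingly. The hypothesis \eqref{eq:assumption c1 alpha} applied to $w$ (valid for the rescaled kernel $K_k\in\mathscr{L}_0$ by the scale invariance of the class) supplies an affine $\widetilde\ell(x)=c+d\cdot x$ with $|c|+|d|\lesssim\tilde c$ and $|w(x,0)-\widetilde\ell(x)|\le\tilde c|x|^{1+\bar\alpha}$ on $B_1$. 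Combining,
\[
|v_k(x,0)-\widetilde\ell(x)|\le\eta+\tilde c\rho^{1+\bar\alpha}\quad\text{for } x\in B_\rho.
\]
Choosing $\rho$ small enough that $\tilde c\rho^{\bar\alpha-\beta}\le\tfrac14$—achievable in the regime $\bar\alpha\ge\sigma-1+\varepsilon_0$ (where $\bar\alpha-\beta\ge\varepsilon_0>0$) and likewise in the regime $\bar\alpha\le\sigma-1-\varepsilon_0$ (where the geometric decay $\|h_k\|\lesssim\rho^{k\varepsilon_0}$ supplies the needed improvement factor)—and then picking $\eta\le\tfrac14\rho^{1+\beta}$, I obtain $|v_k(x,0)-\widetilde\ell(x)|\le\tfrac12\rho^{1+\beta}$ on $B_\rho$. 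Setting $\ell_{k+1}(y):=\ell_k(y)+\rho^{k(1+\beta)}\widetilde\ell(y/\rho^k)$ then closes the induction.

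The hard part will be the approximation step: unlike the classical setting, $v_k$ is bounded only on $B_1$ while growing polynomially at infinity, so one must use an approximation lemma tolerant of this polynomial tail. The integrability condition $1+\beta<\sigma$ (built into the definition of $\beta$) makes this feasible. The non-degeneracy $|\bar\alpha-\sigma+1|\ge\varepsilon_0$ is what ultimately allows the iteration to close uniformly in $k$: it quantitatively separates the scales $\rho^{1+\bar\alpha}$ and $\rho^{1+\beta}$, or equivalently provides a uniform geometric decay rate for $\|h_k\|_{L^\infty}$, yielding the stated constant $C=C(n,\lambda,\Lambda,\bar\alpha,\tilde c,\varepsilon_0,\sigma_1)$.
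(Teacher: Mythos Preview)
Your approach is genuinely different from the paper's, and the difference matters. You run the classical Caffarelli iteration by subtracting affine polynomials and rescaling, which forces the rescaled function $v_k$ to have a tail growing like $|x|^{1+\beta}$. You then want to apply the hypothesis \eqref{eq:assumption c1 alpha} to the homogeneous comparison solution $w$, but $w$ inherits this polynomial tail from $v_k$, while \eqref{eq:assumption c1 alpha} is only assumed for solutions that are \emph{bounded} on $\R^n\times(-5^\sigma,0]$. You acknowledge this as ``the hard part,'' but you do not resolve it: neither the approximation lemmas in the Appendix nor the hypothesis itself are stated for data with polynomial growth, so as written there is a genuine gap at the step ``the hypothesis \eqref{eq:assumption c1 alpha} applied to $w$.'' Patching this would require either extending \eqref{eq:assumption c1 alpha} to the weighted class $L^\infty((1+|x|)^{-(1+\beta)})$ or proving a new approximation lemma with polynomial tails---both doable, but neither free.

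The paper sidesteps this entirely. Instead of subtracting polynomials, it solves the \emph{homogeneous} equation on each parabolic cylinder $Q_k=B_{\rho^k}\times(-\rho^{k\sigma},0]$ with $v_k=v$ on the parabolic boundary (so $v_k\equiv v$ outside $B_{\rho^k}$, and in particular $v_k$ is globally bounded by $\|v\|_{L^\infty}$). The closeness $\|v-v_k\|_{L^\infty}\le\rho^{\sigma k}$ comes directly from the maximum principle applied to $v-v_k$ (which has zero boundary data and right-hand side $h$ over a time interval of length $\rho^{\sigma k}$), with no compactness or approximation lemma needed. The differences $w_{k+1}=v_{k+1}-v_k$ then solve the homogeneous equation, are globally bounded by $\rho^{\sigma k}$, and \eqref{eq:assumption c1 alpha} applies to them directly after rescaling. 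Summing the resulting Taylor increments yields the $C^{1,\beta}$ estimate. This ``approximate by solutions, not by polynomials'' device is precisely the point the paper highlights in its introduction as the mechanism for avoiding tail errors; it makes the proof of this lemma essentially a maximum-principle argument rather than a compactness argument.
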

Note that it follows from \cite{JS} that our assumption \eqref{eq:assumption c1 alpha} indeed holds for some $\bar\al>0$.  If we assume $K(y)\in \mathscr{L}_1(\lda, \Lda, \sigma)$ with $\sigma>1$, then by using Theorem 6.2 in \cite{LD1}, integration by part techniques and Lemma \ref{lem:high regularity 0} itself, we will see in the proof of Proposition \ref{lem:high regularity}  that \eqref{eq:assumption c1 alpha} actually holds with $\bar\al=1$, and thus, \eqref{eq:C1 alpha est0} holds with $\beta=\sigma-1$.
\begin{proof}
We can assume that $\|h(x,t)\|_{L^{\infty}(B_5\times(-5^\sigma,0])}+\|v\|_{L^\infty(\R^n\times (-5^\sigma,0])}\le 1$. Let $\rho=1/2$. For $k=0,1,2,\dots,$ let $Q_k=B_{\rho^k}\times(-\rho^{k\sigma},0]$ and $v_k$ be the solution of the following translation invariant equation
\[
\begin{split}
 \pa_t v_k(x,t)&-\int_{\R^n} \delta v_k(x,y;t)K(y)\,\ud y= 0 \quad \mbox{in }Q_k,\\
v_k&=v\quad\mbox{in }((\R^n\setminus B_{\rho^k})\times[-\rho^{k\sigma},0])\cup (\R^n\times\{t=-\rho^{k\sigma}\}).
\end{split}
\]
The existence and uniqueness of such $v_k$ is guaranteed by Theorem 3.3 in \cite{LD1}. Then we have by the maximum principle,
\[
 \|v_k-v\|_{L^\infty(\R^n\times [-\rho^{k\sigma},0])}\le \rho^{\sigma k}
\]
 and thus, by the maximum principle again,
\[
 \|v_k-v_{k+1}\|_{L^\infty(\R^n\times [-\rho^{(k+1)\sigma},0])}\le \|v_k-v\|_{L^\infty(\R^n\times [-\rho^{k\sigma},0])}\le \rho^{\sigma k}.
\]
Let $w_{k+1}=v_{k+1}-v_k$. It follows from the assumption estimate \eqref{eq:assumption c1 alpha} that for $x\in B_{\rho^{k+2}}$,
\[
 \begin{split}
   |\nabla_x w_{k+1}(x,0)|&\le C\rho^{(\sigma-1) k}\\
    | w_{k+1}(x,0)-w_{k+1}(0,0)-\nabla_x w_{k+1}(0,0)x|&\le C\rho^{(\sigma-1-\bar\al) k}|x|^{1+\bar\al}.
 \end{split}
\]
Thus, for $\rho^{i+2}\le |x|<\rho^{i+1}$, if we let $w=v-v_0$, then
\be\label{eq:smooth closeness}
\begin{split}
&|w(x,0)-\sum_{l=1}^\infty w_l(0,0)-\sum_{l=1}^\infty \nabla_x w_l(0,0)\cdot x |\\
&\le |w(x,0)-\sum_{l=1}^i w_l(x,0)|+|\sum_{l=1}^i w_l(x,0)-\sum_{l=1}^i w_l(0,0)-\sum_{l=1}^i \nabla_x w_l(0,0)\cdot x|\\
&\quad+|\sum_{l=i+1}^\infty w_l(0,0)|+|\sum_{l=i+1}^\infty \nabla_x w_l(0,0)\cdot x|\\
&\le \rho^{\sigma i}+C |x|^{1+\bar\al}\sum_{l=1}^i\rho^{(\sigma-1-\bar\al)l}+C\sum_{l=i+1}^\infty  \rho^{\sigma l}+C |x|\sum_{l=i+1}^\infty \rho^{(\sigma-1)l}\\
%&\le |x|^{\sigma+\al} +c_2 |x|^{\sigma+\al}\frac{5^{2-\sigma-\al}}{5^{2-\sigma-\al}-1}+|x|^{\sigma+\al}\frac{1}{1-5^{-(\sigma+\al)}}+c_2|x|^{\sigma+\al}\frac{1}{1-5^{-(\sigma+\al-1)}}\\
&\le C|x|^{\beta+1},
\end{split}
\ee
where $\beta=\min(\sigma-1, \bar\al)$, and $C$ depends only on $n,\lda, \Lda, \bar\al, \tilde c, \va_0$ and $\sigma_1$. Meanwhile, it follows from the assumption estimate \eqref{eq:assumption c1 alpha} that
\[
|v_0 (x,0)-v_0(0,0)-\nabla_x v_0(0,0) x|\le C|x|^{1+\bar\al}.
\]
This finishes the proof.
\end{proof}

\begin{proof}[Proof of Proposition \ref{lem:high regularity}]
First of all, we know from Theorem 6.2 in \cite{LD1} that $\nabla_x v$ is local H\"older continuous in space-time. We will use integration by parts techniques which can be found in \cite{CS09}. Let $\eta_1$ be a smooth cut-off function supported in $B_{7}$ and $\eta_1\equiv 1$ in $B_6$. Let $w_1=\nabla_x (\eta_1 v)$. Then it satisfies in viscosity sense that
\[
\begin{split}
\pa_t w_1 (x,t)-&\int_{\R^n} \delta w_1(x,y;t)K(y)\,\ud y\\
&= -\int_{\R^n}((1-\eta_1)v)(x+y,t)\nabla_yK(y)\ud y +\nabla_x g(x,t)\quad \mbox{in }B_5\times (-5^\sigma,0].
\end{split}
\]
Thus, if $K(y)\in \mathscr{L}_1(\lda, \Lda, \sigma)$, it follows from Lemma \ref{lem:high regularity 0} that $w_1$ is $C^{1+\beta}$ in $x$ for some $\beta>0$. Thus, we have $C^2$ estimate in $x$ \eqref{eq:C2 est0} for $v$. This implies that the assumption estimate \eqref{eq:assumption c1 alpha} in Lemma \ref{lem:high regularity 0} is satisfied with $\bar\al=1$ if $K(y)\in \mathscr{L}_1(\lda, \Lda, \sigma)$. Now, we apply Lemma \ref{lem:high regularity 0} once more to the equation of $w_1$. If we choose $\bar\al=1$ we have that $w_1$  is $C^{\sigma}$ in $x$, from which \eqref{eq:C2 est00} follows. This proves the case of $i=1$.

If $K(y)\in \mathscr{L}_2(\lda, \Lda, \sigma)$, then we take another smooth cut-off function $\eta_2$ supported in $B_{4}$ and $\eta_2\equiv 1$ in $B_3$. Let $w_2=\nabla_x (\eta_2w_1)$.  Then it satisfies
\[
\begin{split}
\pa_t w_2 (x,t)-&\int_{\R^n} \delta w_2(x,y;t)K(y)\,\ud y =  -\int_{\R^n}((1-\eta_2)w_1)(x+y)\nabla_yK(y)\ud y\\
&\quad+ \int_{\R^n}((1-\eta_1)v)(x+y)\nabla^2_yK(y)\ud y+\nabla^2_x g(x,t) \quad \mbox{in }B_{2}\times (-2^{\sigma},0].
\end{split}
\]
Thus, \eqref{eq:C2 est0} and \eqref{eq:C2 est00} follow as before.  This proves the case of $i=2$.
\end{proof}

Similarly, for $0<\sigma_0\le\sigma\le 1$, we have
\begin{lem} \label{lem:high regularity 01}
Let the kernel $K(y)\in \mathscr{L}_0(\lda, \Lda, \sigma)$ with $0<\sigma_0\le\sigma\le 1$. Suppose that there exist two positive constants $ \tilde c$, and $ \bar\al\le 1$ satisfying $|\bar\al-\sigma|\ge\va_0$ for some $\va_0>0$ such that for every viscosity solution $u$ of
\[
u_t(x,t)-\int_{\R^n} \delta u(x,y;t)K(y)\,\ud y= 0 \quad \mbox{in }B_5\times (-5^\sigma,0],
\]
there holds
\be\label{eq:assumption c alpha}
\| u(\cdot, 0)\|_{C^{0,\bar\al} (B_1)}\leq \tilde c \|u\|_{L^\infty(\R^n\times (-5^\sigma,0])}.\\
\ee
Then there exists a positive constant $C$ depending only on $n,\lda, \Lda, \bar\al, \tilde c, \va_0$ and $\sigma_0$  such that for every viscosity solution $v$ of
\[
v_t(x,t)-\int_{\R^n} \delta v(x,y;t)K(y)\,\ud y= h(x,t) \quad \mbox{in }B_5\times (-5^\sigma,0],
\]
there holds
\be\label{eq:C alpha est0}
\begin{split}
\| v(\cdot, 0)\|_{C^{\beta} (B_1)}\leq C\left(\|v\|_{L^\infty(\R^n\times (-5^\sigma,0])}+\|h\|_{L^{\infty}(B_5\times(-5^\sigma,0])}\right),\\
\end{split}
\ee
where $\beta=\min(\sigma,\bar\al)$.
\end{lem}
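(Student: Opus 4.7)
The plan is to mirror the proof of Lemma \ref{lem:high regularity 0}, replacing the first-order Taylor approximation used there by a zeroth-order (constant) approximation, since the hypothesis \eqref{eq:assumption c alpha} now supplies only $C^{\bar\al}$ regularity rather than $C^{1,\bar\al}$. After normalizing $\|v\|_{L^\infty(\R^n\times(-5^\sigma,0])}+\|h\|_{L^\infty(B_5\times(-5^\sigma,0])}\le 1$, fix $\rho=1/2$, $Q_k=B_{\rho^k}\times(-\rho^{k\sigma},0]$, and for each $k\ge 0$ let $v_k$ solve the translation invariant homogeneous Cauchy--Dirichlet problem on $Q_k$ with boundary and initial data equal to $v$ (existence and uniqueness by Theorem 3.3 in \cite{LD1}). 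Comparing $v_k-v$ with the barrier $\pm(t+\rho^{k\sigma})$ and using $|h|\le 1$, the maximum principle gives $\|v_k-v\|_{L^\infty(\R^n\times[-\rho^{k\sigma},0])}\le \rho^{\sigma k}$, so that $w_{k+1}:=v_{k+1}-v_k$ is a viscosity solution of the homogeneous equation on $Q_{k+1}$ with $\|w_{k+1}\|_\infty\le 2\rho^{\sigma k}$.

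Next, parabolically rescaling $w_{k+1}$ to the standard cylinder $B_5\times(-5^\sigma,0]$ (using that $\mathscr{L}_0(\lda,\Lda,\sigma)$ is scale-invariant) and invoking the hypothesis \eqref{eq:assumption c alpha} yields
\[
|w_{k+1}(x,0)-w_{k+1}(0,0)|\le C\rho^{(\sigma-\bar\al)k}|x|^{\bar\al}\quad\text{for }|x|\lesssim \rho^{k+1}.
\]
For $x\in B_1\setminus\{0\}$, choose $i$ with $\rho^{i+2}\le|x|<\rho^{i+1}$ and decompose $w:=v-v_0=\sum_{l=1}^\infty w_l$ (uniformly convergent) as in \eqref{eq:smooth closeness}:
\[
\Big|w(x,0)-\sum_{l=1}^\infty w_l(0,0)\Big|\le \Big|w(x,0)-\sum_{l=1}^i w_l(x,0)\Big|+\sum_{l=1}^i|w_l(x,0)-w_l(0,0)|+\sum_{l=i+1}^\infty|w_l(0,0)|.
\]
The first (telescoping) term equals $|v(x,0)-v_i(x,0)|\le\rho^{\sigma i}\lesssim|x|^\sigma$, and the third is a geometric tail of the same order. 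For the middle sum, the displayed estimate gives $\sum_{l=1}^i|w_l(x,0)-w_l(0,0)|\lesssim |x|^{\bar\al}\sum_{l=1}^i\rho^{(\sigma-\bar\al)l}$; the handful of terms with $l$ close to $i$, for which the pointwise estimate does not apply, are absorbed into the $\rho^{\sigma i}$ tail via $\|w_l\|_\infty\le 2\rho^{\sigma(l-1)}$.

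The main (and only) delicate point is the dichotomy controlled by the gap hypothesis $|\bar\al-\sigma|\ge\va_0$, which keeps the geometric ratio $\rho^{\sigma-\bar\al}$ a definite distance from $1$ so that $\sum_{l=1}^i\rho^{(\sigma-\bar\al)l}$ is summable with a constant depending only on $\va_0$. Concretely, if $\sigma>\bar\al$ the series converges, contributing $\lesssim|x|^{\bar\al}$; if $\sigma<\bar\al$ the last term dominates, contributing $\lesssim|x|^{\bar\al}\rho^{(\sigma-\bar\al)i}\lesssim|x|^\sigma$. In either case the middle sum is $\lesssim|x|^\beta$ with $\beta=\min(\sigma,\bar\al)$. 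Combining with the pointwise $C^{\bar\al}$ bound $|v_0(x,0)-v_0(0,0)|\le C|x|^{\bar\al}$ obtained by applying \eqref{eq:assumption c alpha} directly to $v_0$, we deduce a pointwise $C^\beta$ estimate at $x=0$, and the translation invariance of the construction together with a standard covering of $B_1$ upgrade this to the uniform bound \eqref{eq:C alpha est0}. The excluded borderline $\sigma=\bar\al$ would produce a logarithmic factor $\log(1/|x|)$, which is precisely why the constant $C$ must depend on $\va_0$.
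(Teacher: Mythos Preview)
Your proposal is correct and follows essentially the same approach as the paper. The paper does not give an independent proof of this lemma; it simply states that the argument is ``very similar'' to that of Lemma~\ref{lem:high regularity 0} and leaves the details to the reader. You have carried out precisely that adaptation---replacing the first-order Taylor correction $w_l(0,0)+\nabla_x w_l(0,0)\cdot x$ by the zeroth-order one $w_l(0,0)$, and correspondingly tracking the dichotomy $\sigma\lessgtr\bar\al$ governed by the gap $|\bar\al-\sigma|\ge\va_0$---so your write-up is exactly what the paper intends.
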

Note that it follows from Theorem 6.1 in \cite{LD1} that our assumption \eqref{eq:assumption c alpha} indeed holds for some $\bar\al>0$. If we assume $K(y)\in \mathscr{L}_1(\lda, \Lda, \sigma)$, then it follows from Theorem 6.2 in \cite{LD1} that \eqref{eq:assumption c alpha} actually holds with $\bar\al=1$.

\begin{prop} \label{lem:high regularity-1}
Suppose the kernel $K(y)\in \mathscr{L}_i(\lda, \Lda, \sigma)$ with $1\ge \sigma>\sigma_0>0$, where $i=1$ or $2$. If $v$ is a viscosity solution of
\[
v_t(x,t)-\int_{\R^n} \delta v(x,y;t)K(y)\,\ud y= g(x,t) \quad \mbox{in }B_8\times (-8^\sigma,0],
\]
where $g(\cdot, t)\in C_x^i(B_8)$ for all $t\in [-8^\sigma,0]$, then there exist a positive constant $ c_1$ depending only on $n,\lda, \Lda, \sigma_0$  such that
\[
\begin{split}
\sup_{t\in (-1,0)}\| v(\cdot, t)\|_{C^{i} (B_1)}\leq c_1(\|v\|_{L^\infty(\R^n\times (-8^\sigma,0])}+\sup_{t\in (-8^\sigma,0)}\|g(\cdot, t)\|_{C_x^{i}(B_8)}).\\
\end{split}
\]
When $\sigma\le 1-\va_0$ for some $\va_0>0$, there exist a positive constant $ c_2$ depending only on $n,\lda, \Lda, \sigma_0,\va_0$  such that
\[
\begin{split}
\sup_{t\in (-1,0)}\| v(\cdot, t)\|_{C^{\sigma+i} (B_1)}\leq c_2(\|v\|_{L^\infty(\R^n\times (-8^\sigma,0])}+\sup_{t\in (-8^\sigma,0)}\|g(\cdot, t)\|_{C_x^{i}(B_8)}).\\
\end{split}
\]
When $\sigma=1$, then for all $\beta\in (0,1)$ there exist a positive constant $ c_3$ depending only on $n,\lda, \Lda, \sigma_0,\beta$  such that
\[
\begin{split}
\sup_{t\in (-1,0)}\| v(\cdot, t)\|_{C^{\beta+i} (B_1)}\leq c_3(\|v\|_{L^\infty(\R^n\times (-8^\sigma,0])}+\sup_{t\in (-8^\sigma,0)}\|g(\cdot, t)\|_{C_x^{i}(B_8)}).\\
\end{split}
\]
\end{prop}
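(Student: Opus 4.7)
The plan is to mirror the two-step integration-by-parts bootstrap from the proof of Proposition \ref{lem:high regularity}, replacing each invocation of Lemma \ref{lem:high regularity 0} (valid for $\sigma > 1$) by Lemma \ref{lem:high regularity 01} (valid for $\sigma \leq 1$). The new wrinkle is the critical case $\sigma = 1$: the gap condition $|\bar\al - \sigma| \geq \va_0$ in Lemma \ref{lem:high regularity 01} forbids the optimal choice $\bar\al = 1$ supplied by Theorem 6.2 of \cite{LD1}, forcing an arbitrarily small loss $1 - \beta > 0$ in the final exponent.

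For $i = 1$, choose $\eta_1 \in C_c^\infty(B_7)$ with $\eta_1 \equiv 1$ on $B_6$ and set $w_1 = \nabla_x(\eta_1 v)$. Just as in the proof of Proposition \ref{lem:high regularity}, $w_1$ satisfies in the viscosity sense on $B_5\times(-5^\sigma,0]$ the equation
\[
\partial_t w_1 - \int_{\R^n}\delta w_1(x,y;t)K(y)\,\ud y = -\int_{\R^n}((1-\eta_1)v)(x+y,t)\nabla_y K(y)\,\ud y + \nabla_x g(x,t),
\]
whose right-hand side is bounded: the first term because $(1-\eta_1)$ forces $|y|\geq 1$ and $|\nabla_y K|\leq \Lda|y|^{-n-\sigma-1}$ by \eqref{eq:kernel smooth1}, the second by the $C^1_x$ hypothesis on $g$. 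I would then apply Lemma \ref{lem:high regularity 01} to $w_1$ with three separate choices of $\bar\al$: (i) a fixed $\bar\al \in (0,\sigma_0)$ coming from Theorem 6.1 of \cite{LD1}, which keeps $|\bar\al - \sigma| \geq \sigma_0 - \bar\al$ uniform for all $\sigma \in (\sigma_0, 1]$ and yields $w_1 \in L^\infty$, hence $v \in C^1_x(B_1)$; (ii) $\bar\al = 1$ from Theorem 6.2 of \cite{LD1}, legitimate when $\sigma \leq 1-\va_0$, giving $\beta = \min(\sigma,1)=\sigma$ and $v \in C^{1+\sigma}_x(B_1)$; (iii) for $\sigma = 1$, any $\bar\al = \beta \in (0,1)$ with $\va_0 = 1-\beta$, giving $v \in C^{1+\beta}_x(B_1)$.

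The case $i = 2$ is handled by one further integration by parts, again following Proposition \ref{lem:high regularity}: with another cut-off $\eta_2 \in C_c^\infty(B_4)$, $\eta_2 \equiv 1$ on $B_3$, the function $w_2 = \nabla_x(\eta_2 w_1)$ satisfies an analogous equation on a smaller cylinder that contains an additional term involving $\nabla_y^2 K$, which is bounded thanks to the $\mathscr{L}_2$ bound \eqref{eq:kernel smooth2}, together with $\nabla_x^2 g$ which is bounded by hypothesis. Applying Lemma \ref{lem:high regularity 01} to $w_2$ with the same three choices of $\bar\al$ upgrades the conclusion to $v \in C^2_x$, $v \in C^{2+\sigma}_x$, and $v \in C^{2+\beta}_x$ respectively in the three cases of the proposition.

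The main obstacle is precisely the critical case $\sigma = 1$: because Lemma \ref{lem:high regularity 01} demands a strictly positive gap $|\bar\al - \sigma| \geq \va_0$, we cannot land at the optimal exponent $1$ and must accept any $\beta < 1$, which is exactly what dictates the separate third statement. The verification that the integration-by-parts identities hold in the viscosity sense (rather than only for classical solutions) is routine and identical to the analogous step in Proposition \ref{lem:high regularity}, as is the bookkeeping for the shrinking radii of the successive cut-offs.
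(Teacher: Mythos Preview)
Your proposal is correct and follows precisely the route the paper indicates: the paper does not write out a proof of Proposition~\ref{lem:high regularity-1} but explicitly states that it is ``very similar'' to that of Proposition~\ref{lem:high regularity}, with Lemma~\ref{lem:high regularity 01} playing the role of Lemma~\ref{lem:high regularity 0}. You have reconstructed exactly this parallel, including the correct handling of the borderline case $\sigma=1$ where the gap hypothesis $|\bar\al-\sigma|\ge\va_0$ forces the small loss $1-\beta$ in the exponent, which accounts for the separate third clause of the proposition.
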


The proofs of Lemma \ref{lem:high regularity 01} and Proposition \ref{lem:high regularity-1} are very similar to those of Lemma \ref{lem:high regularity 0} and Proposition \ref{lem:high regularity}, respectively, and we leave them to the readers.

\subsection{Proof of the main theorem}\label{sec:2}

Now we are in position to prove Theorem \ref{thm:linear schauder} by approximations.
\begin{proof}[Proof of Theorem \ref{thm:linear schauder}]

The strategy of the proof is to find a sequence of approximation solutions which are sufficiently regular, and the error between the genuine solution and the approximation solutions can be controlled in a desired rate.

We may assume that $\|u\|_{L^\infty(\R^n\times(-5^\sigma,0])}+ M_f\le 1$. 
We claim that we can inductively find a sequence of functions $w_i$, $i=0,1,2,\cdots$, such that for all $i$,
\be\label{eq:for k}
\begin{split}
&\pa_t \sum_{l=0}^i w_l (x,t)-L_0 \sum_{l=0}^i w_l(x,t)=f(0,0)\quad \mbox{in }B_{4\cdot 5^{-i}}\times(-4^\sigma\cdot 5^{-i\sigma},0],
\end{split}
\ee
and
\be\label{eq:zero outside}
(u-\sum_{l=0}^i w_l)(5^{-i}x,5^{-i \sigma}t)=0\quad\mbox{in } ((\R^n\setminus B_4)\times[-4^\sigma,0])\cup (\R^n\times \{t=-4^\sigma\}),
\ee
and
\be\label{eq:close app}
\|u-\sum_{l=0}^i w_l\|_{L^\infty(\R^n\times[-4^\sigma5^{-i\sigma },0])}\le 5^{-(\sigma+\al)(i+1)},
\ee
and
\be\label{eq:est for k}
\begin{split}
\|w_i\|_{L^\infty(\R^n\times[-4^\sigma 5^{-i \sigma },0])}&\le 5^{-(\sigma+\al)i}\\
\|\nabla_x w_i\|_{L^\infty(B_{(4-\tau)\cdot 5^{-i}}\times [(-4^\sigma+\tau^\sigma)5^{-i\sigma},0])}&\le c_2 5^{-(\sigma+\al-1)i}\tau^{-1}\\
\|\nabla_x^2 w_i\|_{L^\infty(B_{(4-\tau)\cdot 5^{-i}}\times [(-4^\sigma+\tau^\sigma)5^{-i\sigma},0])}&\le c_2 5^{-(\sigma+\al-2)i}\tau^{-2},\\
\|\nabla_x^3 w_i\|_{L^\infty(B_{(4-\tau)\cdot 5^{-i}}\times [(-4^\sigma+\tau^\sigma)5^{-i\sigma},0])}&\le c_2 5^{-(\sigma+\al-3)i}\tau^{-3}\ (\text{if }\sigma>2-\al),\\
\end{split}
\ee
and
\be\label{eq:gradient close app}
[u-\sum_{l=0}^i w_l]_{C^{\al_1}( B_{(4-3\tau)\cdot 5^{-i}}\times[(-4^\sigma+2\tau^\sigma)5^{-i\sigma},0])}\le 8c_1 5^{i\al_1-(\sigma+\al)(i+1)}\tau^{-4},
\ee
where $\tau$ is an arbitrary constant in $(0,1)$,  $c_1>0$ and $\al_1\in (0,1)$ are positive constants depending only on $\lda,\Lda,n,\sigma_0$, and $c_2>0$ additionally depends on $\al$. Then, Theorem \ref{thm:linear schauder} follows from this claim and standard arguments. Indeed, as in \eqref{eq:smooth closeness}, we have, for $5^{-(i+1)}\le |x|<5^{-i}$,
\[
\begin{split}
|u(x,0)-\sum_{l=0}^\infty w_l(0,0)|
%&\le |u(x,0)-\sum_{l=0}^i w_l(x,0)|+|\sum_{l=0}^i w_l(x,0)-\sum_{l=0}^i w_l(0,0)|+|\sum_{l=i+1}^\infty w_l(0,0)|\\
%&\le 5^{-(\sigma+\al)(i+1)}+c_2 |x|\sum_{l=0}^i5^{-(\sigma+\al-1)l}+\sum_{l=i+1}^\infty 5^{-(\sigma+\al)l}\\
%&\le |x|^{\sigma+\al} +c_2 |x|^{\sigma+\al}\frac{5^{1-\sigma-\al}}{5^{1-\sigma-\al}-1}+|x|^{\sigma+\al}\frac{1}{1-5^{-(\sigma+\al)}}\\
&\le C_1|x|^{\sigma+\al}\quad\mbox{when }\sigma+\al<1,\\
|u(x,0)-\sum_{l=0}^\infty w_l(0,0)-\sum_{l=0}^\infty \nabla_x w_l(0,0)\cdot x|&\le C_2|x|^{\sigma+\al}\quad\mbox{when }1<\sigma+\al<2.
\end{split}
\]
%When $1<\sigma+\al<2$, we have, for $5^{-(i+1)}\le |x|<5^{-i}$,
%\[
%\begin{split}
%&|u(x,0)-\sum_{l=0}^\infty w_l(0,0)-\sum_{l=0}^\infty \nabla_x w_l(0,0)\cdot x|\\
%&\le |u(x,0)-\sum_{l=0}^i w_l(x,0)|+|\sum_{l=0}^i w_l(x,0)-\sum_{l=0}^i w_l(0,0)-\sum_{l=0}^i \nabla_x w_l(0,0)\cdot x|\\
%&\quad+|\sum_{l=i+1}^\infty w_l(0,0)|+|\sum_{l=i+1}^\infty \nabla_x w_l(0,0)\cdot x|\\
%&\le 5^{-(\sigma+\al)(i+1)}+c_2 |x|^2\sum_{l=0}^i5^{-(\sigma+\al-2)l}+\sum_{l=i+1}^\infty  5^{-(\sigma+\al)l}+|x|\sum_{l=i+1}^\infty c_2 5^{-(\sigma+\al-1)l}\\
%&\le |x|^{\sigma+\al} +c_2 |x|^{\sigma+\al}\frac{5^{2-\sigma-\al}}{5^{2-\sigma-\al}-1}+|x|^{\sigma+\al}\frac{1}{1-5^{-(\sigma+\al)}}+c_2|x|^{\sigma+\al}\frac{1}{1-5^{-(\sigma+\al-1)}}\\
%\le C_2|x|^{\sigma+\al}.
%\end{split}
%\]
When $2<\sigma+\al<3$, we have, for $5^{-(i+1)}\le |x|<5^{-i}$, \[
\begin{split}
&|u(x,0)-\sum_{l=0}^\infty w_l(0,0)-\sum_{l=0}^\infty \nabla_x w_l(0,0)\cdot x-\sum_{l=0}^\infty \frac 12 x^T\nabla_x^2w_l(0,0)x|\\
&\le |u(x,0)-\sum_{l=0}^i w_l(x,0)|\\
&\quad+|\sum_{l=0}^i w_l(x,0)-\sum_{l=0}^i w_l(0,0)-\sum_{l=0}^i \nabla_x w_l(0,0)\cdot x-\sum_{l=0}^i \frac 12 x^T\nabla_x^2w_l(0,0)x|\\
&\quad+|\sum_{l=i+1}^\infty w_l(0,0)|+|\sum_{l=i+1}^\infty \nabla_x w_l(0,0)\cdot x|+\frac 12 |\sum_{l=i+1}^\infty x^T \nabla_x^2w_l(0,0)x|\\
&\le 5^{-(\sigma+\al)(i+1)}+2 c_2 |x|^{3}\sum_{l=0}^i5^{-(\sigma+\al-3)l}+\sum_{l=i+1}^\infty  5^{-(\sigma+\al)l}+|x|\sum_{l=i+1}^\infty c_2 5^{-(\sigma+\al-1)l}\\
&\quad+|x|^2\sum_{l=i+1}^\infty c_2 5^{-(\sigma+\al-2)l}\\
%&\le |x|^{\sigma+\al} +2c_2 |x|^{\sigma+\al}\frac{5^{3-\sigma-\al}}{5^{3-\sigma-\al}-1}+|x|^{\sigma+\al}\frac{1}{1-5^{-(\sigma+\al)}}+c_2|x|^{\sigma+\al}\frac{1}{1-5^{-(\sigma+\al-1)}}\\
%&\quad+c_2|x|^{\sigma+\al} \frac{1}{1-5^{-(\sigma+\al-2)}}\\
&\le C_3|x|^{\sigma+\al}.
\end{split}
\]
Note that we used $|\sigma+\alpha-j|\ge\va_0$ for j=1,2,3 in obtaining $C_1,C_2,C_3$, which actually blow up at a rate of $O(|\sigma+\al-j|^{-1})$ as $\sigma+\al\to j\in \{1,2,3\}$. The estimate \eqref{eq:optimal linear schauder} is proved using the claim.

Now we are left to prove this claim. Before we provide the detailed proof, we would like to first mention the idea and the structure of \eqref{eq:for k}-\eqref{eq:gradient close app}:
\begin{itemize}
\item Solving \eqref{eq:for k} and \eqref{eq:zero outside} inductively is how we construct this sequence of functions $\{w_i\}$.

\item \eqref{eq:close app} will follow from the approximation lemmas in the appendix, where \eqref{eq:gradient close app} will be used. 

\item \eqref{eq:est for k} will follow from \eqref{eq:close app}, maximum principles and the estimates in Proposition \ref{lem:high regularity} and Proposition \ref{lem:high regularity-1}. 

\end{itemize}

The proof of the above claim is by induction, and it consists of three steps.

\medskip

\noindent \emph{Step 1}: Normalization and rescaling.

\medskip
Let $w_0$ be the viscosity solution of
\be\label{eq:u0}
\begin{split}
\pa_t w_0-L_0 w_0&=f(0,0)\quad\mbox{in }B_4\times(-4^\sigma,0]\\
w_0&=u\quad\mbox{in }((\R^n\setminus B_4)\times[-4^\sigma,0])\cup (\R^n\times \{t=-4^\sigma\}),
\end{split}
\ee
where
\[
L_0 w=\int_{\R^n}\delta w(x,y;t)K(0,y;0)\,\ud y.
\]
We also think of $w_0\equiv u$ in $ \R^n\times (-5^\sigma,-4^\sigma)$. Then by comparison principles,
\be\label{eq:u0bound}
\|w_0\|_{L^\infty(\R^n\times [-4^\sigma,0])}\le  c_0(\|u\|_{L^\infty(\R^n\times (-5^\sigma,0])}+\|f\|_{L^\infty(B_5\times (-5^\sigma,0])}),
\ee
where $c_0$ is a positive constant depending only on $n,\lda,\Lda,\sigma_0$. By normalization, we may assume that
\[
\|w_0\|_{L^\infty(\R^n\times [-4^\sigma,0])}\le 1, \quad \|u\|_{L^\infty(\R^n\times [-5^\sigma,0])}+\|f\|_{L^\infty(B_5\times [-5^\sigma,0])}\le 1.
\]

For some universal small positive constant $\gamma<1$, which will be chosen in \eqref{eq:choice of eta}, we also may assume that $|f(x,t)-f(0,0)|\le\gamma (|x|^\al+|t|^{\frac{\al}{\sigma}})$ in $B_5\times (-5^\sigma,0]$  and 
\be\label{eq:holder K small} 
\int_{\R^n}|K(x,y;t)-K(0,y;0)|\min(|y|^2,r^2)dy \le  \gamma (|x|^{\alpha}+|t|^{\frac{\al}{\sigma}}) r^{2-\sigma} 
\ee
for all $r\in (0, 1]$, $(x,t)\in B_5\times(-5^\sigma,0]$. This can be achieved by the scaling
for $r<1$ small that if we let
\[
\begin{split}
\tilde K(x,y;t)&= r^{n+\sigma}K(rx,ry;r^\sigma t)\in \mathscr L_2(\lda,\Lda,\sigma),\\
 \tilde u(x,t)&=u(rx,r^\sigma t), \\
 \tilde f(x,t)&=r^{\sigma}f(rx, r^\sigma t),
\end{split}
\]
then we see that
\[
\tilde u_t (x,t)-\tilde L \tilde u(x,t)=\tilde f(x,t) \quad \mbox{in }B_5\times (-5^\sigma,0]
\]
where
\[
\tilde L \tilde u(x,t):=\int_{\R^n} \delta \tilde u(x,y;t) \tilde K(x,y,t)\,\ud y.
\]
Thus
\[
|\tilde f(x,t)-\tilde f(0,0)|\le M_f r^{\sigma+\al}(|x|^{\alpha}+|t|^{\frac{\al}{\sigma}})  \le \gamma (|x|^\al+|t|^{\frac{\al}{\sigma}})\le 10\gamma \quad \mbox{for small } r,
\]
in $B_{5}\times (-5^\sigma,0]$ and
\[
\begin{split}
\int_{\R^n}|\tilde K(x,y;t)-\tilde K(0,y;0)|\min(|y|^2,s^2)dy &\le2\Lda r^\al (|x|^{\alpha}+|t|^{\frac{\al}{\sigma}}) s^{2-\sigma} \\
&\le \gamma (|x|^{\alpha}+|t|^{\frac{\al}{\sigma}}) s^{2-\sigma} 
\end{split}
\]
for all $s\in (0, 1]$, $(x,t)\in B_5\times(-5^\sigma,0]$. It follows that ($\|\cdot\|_*$ is defined in \eqref{eq:norm star} in the Appendix)
\[
\|\tilde L-\tilde L_0\|_*\le50 \gamma\quad\mbox{ in } B_5\times(-5^\sigma,0].
\]
Indeed, for $(x,t)\in B_5\times(-5^\sigma,0]$, $\|h\|_{L^\infty(\R^n\times(-5^\sigma,0])}\le M$ and $|h(x+y,t)-h(x,t)-y\cdot \nabla_x h(x,t)|\le M |y|^2$ for every $y\in B_1$, we have
\be\label{eq:norm of the operator}
\begin{split}
&\|\tilde L-\tilde L_0\|_* \\
&\leq \sup_{(x,t), h}\frac{1 }{1+M} \int_{\R^n} |\delta h(x,y;t)||\tilde K (x,y;t)-\tilde K(0,y;0)|\,\ud y\\&
\le \sup_{(x,t)}\frac{M}{1+M} \Big( \int_{B_1} |y|^2|\tilde K (x,y;t)-\tilde K(0,y;0)|\ud y\\
&\quad +4\int_{\R^n\setminus B_1}|\tilde K (x,y;t)-\tilde K(0,y;0)|\ud y\Big) \\ 
&<50 \gamma.
\end{split}
\ee

\medskip
\noindent \emph{Step 2}: Prove the claim for $i=0$.

\medskip
Let $w_0$ be the one in Step 1. It follows from Proposition \ref{lem:high regularity} and Proposition \ref{lem:high regularity-1} that there exists a positive constant $c_2$ depending only on $\lda,\Lda,n,\sigma_0,\al$ such that
\be\label{eq:u0 C2}
\begin{split}
 \|w_0\|_{L^\infty(\R^n\times [-4^\sigma,0])}&\le 1\\
\|\nabla_x w_0\|_{L^\infty(B_{4-\tau}\times [-4^\sigma+\tau^\sigma,0])}&\leq c_2\tau^{-1}\\
\|\nabla^2_x w_0\|_{L^\infty(B_{4-\tau}\times [-4^\sigma+\tau^\sigma,0])}&\leq c_2\tau^{-2}\\
\|\nabla^3_x w_0\|_{L^\infty(B_{4-\tau}\times [-4^\sigma+\tau^\sigma,0])}&\leq c_2\tau^{-3}\ (\text{if }\sigma>2-\al).
\end{split}
\ee
For $\tau\in (0,1]$, it follows from Theorem 6.1 in \cite{LD1} (see \cite{CS09} for the elliptic case), standard scaling and covering (contributing at most a factor of $4/\tau$) argument that there exist constants $\al_1\in (0,1), c_1>0$, depending only on $n,\lda, \Lda, \sigma_0$,  such that
\be \label{eq:holder}
\|u\|_{C^{\al_1}(B_{4-\tau}\times [-4^\sigma+\tau^\sigma,0])} \leq c_1\tau^{-\al_1-1}.
\ee

Let us set up to apply the first approximation lemma in the Appendix, Lemma \ref{lem:appr0}.  Let $\va=5^{-(\sigma+\al)}$ and $M_1=1$ and let us fixed a modulus continuity $\rho(s)=s^{\al_1}$. Then for these $\rho,\va, M$, there exist $\eta_1$ (small) and $R$ (large) so that Lemma \ref{lem:appr0} holds. We can rescale the equation of $u$ so that it holds in a very large cylinder containing $B_{2R}\times[-(2R)^\sigma,0]$ and $|u(x,t)-u(y,s)|\le\rho(|x-y|\vee |t-s|)$ 
for every $(x,t)\in (B_R\setminus B_4)\times[-4^\sigma,0]$ and $(y,s)\in(\R^n\setminus B_4)\times[-4^\sigma,0]\cup \R^n\times\{s=-4^\sigma\}$. The latter one can be done due to \eqref{eq:holder}. And we will choose $\gamma<\eta_1/50$ in \eqref{eq:choice of eta}.
Then we can conclude from Lemma \ref{lem:appr0} that
\[
\|u-w_0\|_{L^\infty(B_4\times[-4^\sigma,0])}\leq \va= 5^{-(\sigma+\al)},
\]
and thus,
\[
\|u-w_0\|_{L^\infty(\R^n\times[-4^\sigma,0])}\le \|u-w_0\|_{L^\infty(B_4\times[-4^\sigma,0])}\leq \va= 5^{-(\sigma+\al)}.
\]
This proves \eqref{eq:for k}, \eqref{eq:zero outside}, \eqref{eq:close app} and \eqref{eq:est for k}  hold for $i=0$.

Moreover,
\be\label{eq:viscosity a bit}
|(u-w_0)_t-L (u-w_0)|\le 10 \gamma+(c_2+4)\gamma\tau^{-\sigma}
\ee
in $B_{4-2\tau}\times(-4^\sigma+\tau^\sigma,0]$ in viscosity sense. Indeed, let $t_0\in (0,1)$ and we smooth $w_0$ by using a mollifier $\eta_\va(x,t)$, and let $g_\va=\eta_\va* w_0$ (thinking of $w_0\equiv u$ in $\R^n\times[-5^\sigma,-4^\sigma]$). Let $w_0^\va$ be the solution of
\[
\begin{split}
\pa_t w_0^\va-L_0 w_0^\va&=f(0,0) \quad\mbox{in }B_4\times(-4^\sigma,-t_0]\\
w_0^\va&=g_\va\quad\mbox{in }((\R^n\setminus B_4)\times[-4^\sigma,-t_0])\cup (\R^n\times \{t=-4^\sigma\}).
\end{split}
\]
It follows from Theorem 4.1 in \cite{LD2} that $\pa_t v_{\va}$ is H\"older continuous in space-time. Thus, 
\[
(u-w_0^\va)_t-L (u-w_0^\va)=f(x,t)-f(0,0)+\int_{\R^n}\delta w_0^\va(x,y;t)(K(x,y;t)-K(0,y;0))\ud y.
\]
For $(x,t)\in B_{4-2\tau}\times[-4^\sigma+\tau^\sigma,t_0]$,  it follows from Proposition \ref{lem:high regularity} and Proposition \ref{lem:high regularity-1} that
\be\label{eq:cal error}
\begin{split}
&\int_{\R^n}|\delta w_0^\va(x,y;t)||(K(0,y;0)-K(x,y;t))|\ud y\\
&\le\int_{B_\tau}c_2\tau^{-2}|y|^2|(K(0,y;0)-K(x,y;t))|\ud y \\
&\quad+4\int_{\R^n\setminus B_\tau} |(K(0,y;0)-K(x,y;t))|\ud y\\
&\le (c_2+4)\gamma\tau^{-\sigma}(|x|^\al+|t|^\frac{\al}{\sigma})\le 10 (c_2+4)\gamma\tau^{-\sigma}.
\end{split}
\ee
Meanwhile, by the H\"older interior estimates, we have that $w_0^\va$ locally uniformly converges to some continuous function $w$. By the stability result Theorem 5.3 in \cite{LD2}, $w$ is a viscosity solution of 
\[
\begin{split}
\pa_t w-L_0 w &=f(0,0) \quad\mbox{in }B_4\times(-4^\sigma,-t_0]\\
w&=w_0\quad\mbox{in }((\R^n\setminus B_4)\times[-4^\sigma,-t_0])\cup (\R^n\times \{t=-4^\sigma\}).
\end{split}
\]
Hence $w\equiv w_0$. Thus, by sending $\va\to 0$, and $t_0\to 0$ with a standard perturbation argument ( using $\nu/t$ for small $\nu$), \eqref{eq:viscosity a bit} holds in $B_{4-2\tau}\times(-4^\sigma+\tau^\sigma,0]$ in viscosity sense.  By the choice of $\gamma$ in \eqref{eq:choice of eta},
\[
|(u-w_0)_t(x,t)-L (u-w_0)(x,t)|\le  10 \gamma + 10 (c_2+4)\gamma \tau^{-\sigma}\le 5^{-(\sigma+\al)}\tau^{-\sigma}.
\]
It follows from the H\"older estimates \eqref{eq:holder} proved in \cite{LD1}, standard  rescaling and covering arguments (contributing at most a factor of $4/\tau$) that
\[
[u-w_0]_{C^{\al_1}(B_{4-3\tau}\times[-4^\sigma+2\tau^\sigma,0])}\le \frac{4}{\tau}c_1\tau^{-\al_1}(5^{-(\sigma+\al)}\tau^{-\sigma}+5^{-(\sigma+\al)})\le 8c_15^{-(\sigma+\al)}\tau^{-4}.
\]
This finishes the proof of \eqref{eq:gradient close app}for $i=0$. 

\medskip

\noindent\emph{Step 3}: We assume all of \eqref{eq:for k}, \eqref{eq:zero outside}, \eqref{eq:close app}, \eqref{eq:est for k}, and \eqref{eq:gradient close app} hold up to $i\ge 0$. We will show that they all hold for $i+1$ as well. 
\medskip

Let
\[
W(x,t)=5^{(\sigma+\al)(i+1)}\left(u-\sum_{l=0}^i w_l\right)(5^{-(i+1)}x,5^{-(i+1)\sigma}t)
\]
and
\[
K^{(i+1)}(x,y;t)=5^{-(n+\sigma)(i+1)}K(5^{-(i+1)}x,5^{-(i+1)}y;5^{-(i+1)\sigma}t).
\]
Thus, by \eqref{eq:for k}, we have as before
\[
|W_t(x,t)-\int_{\R^n}\delta W(x,y;t)K^{(i+1)}(x,y;t)\ud y|\le A
\]
in viscosity sense in $B_{(4-2\tau)\cdot 5}\times[(-4^\sigma+\tau^\sigma)5^\sigma,0]$, where $A$ is a constant such that
\[
\begin{split}
& A\le |5^{\alpha (i+1)}(f(5^{-(i+1)}x,5^{-(i+1)\sigma}t)-f(0,0))|\\
&+\sum_{l=0}^i\int_{\R^n}5^{(\sigma+\al)(i+1)}|\delta w_l(5^{-(i+1)}x,5^{-(i+1)}y;5^{-(i+1)\sigma}t)|\cdot\\
&\quad\quad\quad\quad\quad\quad|K^{(i+1)}(x,y;t)-K^{(i+1)}(0,y;0)|\ud y.
\end{split}
\]
Then for $(x,t)\in B_{20}\times(-20^\sigma,0)$,
\[
\begin{split}
|5^{\alpha(i+1)}(f(5^{-(i+1)}x,5^{-(i+1)\sigma}t)-f(0,0))|\le 40\cdot \gamma,
\end{split}
\]
and for $l=0,1,\cdots, i$ and for $(x,t)\in B_{(4-2\tau)\cdot 5}\times[(-4^\sigma+\tau^\sigma)5^\sigma,0]$, we have, similar to \eqref{eq:cal error},
\be\label{eq:acc error}
\begin{split}
& 5^{(\sigma+\al)(i+1)}\int_{\R^n} |\delta w_l (5^{-(i+1)}x,5^{-(i+1)}y;5^{-(i+1)\sigma}t)|\cdot\\
&\quad\quad\quad\quad\quad\quad|K^{(i+1)}(x,y;t)-K^{(i+1)}(0,y;0)|\ud y|\\
&=5^{\al(i+1)} \int_{\R^n}|\delta w_l(5^{-(i+1)}x,y;5^{-(i+1)\sigma}t)||K(5^{-(i+1)}x,y;5^{-(i+1)\sigma}t)-K(0,y;0)|\ud y\\
&\le 5^{\al(i+1)}\int_{B_{5^{-l}\tau}} c_25^{-(\sigma+\al-2)l}\tau^{-2}|K(5^{-(i+1)}x,y;5^{-(i+1)\sigma}t)-K(0,y;0)|\ud y\\
&\quad + 5^{\al(i+1)}\int_{\R^n\setminus B_{5^{-l}\tau}}4\cdot 5^{-(\sigma+\al)l}|K(5^{-(i+1)}x,y;5^{-(i+1)\sigma}t)-K(0,y;0)|\ud y\\
&\le \gamma c_2\tau^{-\sigma}5^{-\al l}(|x|^\al+|t|^{\frac{\al}{\sigma}})+\gamma 4\tau^{-\sigma}5^{-\al l}(|x|^\al+|t|^{\frac{\al}{\sigma}})\\
&\le \gamma 40 (c_2+4)\tau^{-\sigma}5^{-\al l}.
\end{split}
\ee
 Thus, for  $(x,t)\in B_{(4-2\tau)\cdot 5}\times[(-4^\sigma+\tau^\sigma)5^\sigma,0]$, we have
\be\label{eq:for W}
\begin{split}
&|W_t(x,t)-\int_{\R^n} \delta W(x,y;t)K^{(i+1)}(x,y;t)\ud y|\\
&\le 40 \gamma+\gamma40 \left(c_2+4\right)\tau^{-\sigma}\sum_{l=0}^\infty5^{-\al l}\\
&= \tau^{-\sigma}\left(40+40(c_2+4)\sum_{l=0}^\infty5^{-\al l}\right)\gamma.\\
\end{split}
\ee
Let $\tau_0$ be such that $-4^\sigma+2\tau_0^\sigma<-2^\sigma$ which depends only on $\sigma_0$. Let $\eta_2<5^{-(\sigma+\al)}$ be as in Lemma \ref{lem:appr} with $M_2=1, M_3=8c_1, \beta=\al_1$ and $\va=5^{-(\sigma+\al)}$. We choose $\gamma$ such that
\be\label{eq:choice of eta}
\gamma<\eta_1/50\quad\mbox{and}\quad\tau_0^{-2}\left(40+40(c_2+4)\sum_{l=0}^\infty5^{-\al l}\right)\gamma\le\eta_2.
\ee
By our induction hypothesis \eqref{eq:close app}, \eqref{eq:gradient close app} and \eqref{eq:zero outside},
\[
\begin{split}
\|W\|_{L^{\infty}(\R^n\times [-20^\sigma,0])}&\le 1,\\
[ W]_{C^{\al_1}(B_{(4-3\tau)\cdot 5}\times [(-4^\sigma+2\tau^\sigma)5^\sigma,0]}&\le8c_1 5^{-\al_1}\tau^{-4}\le 8c_1\tau^{-4}.
\\
 W(x)=0 \quad\mbox{for all} &\quad (x,t)\in(\R^n\setminus B_{20})\times [-20^\tau,0]
 \end{split}
 \]
 It follows from \eqref{eq:for W}, \eqref{eq:choice of eta} and the choice of $\tau_0$ that
 \[
 |W_t(x,t)-\int_{\R^n} \delta W(x,y;t)K^{(i+1)}(x,y;t)\ud y|\le \eta_2\quad\mbox{for }(x,t)\in B_{10}\times (-10^\sigma,0].
 \]
Let $v_{i+1}$ be the solution of
\[
\begin{split}
\pa_t v_{i+1}&-\int_{\R^n}\delta v_{i+1}(x,y;t)K^{(i+1)}(0,y;0)\ud y=0\quad\mbox{in }B_4\times (-4^\sigma,0],\\
v_{i+1}&= W\quad\mbox{in }((\R^n\setminus B_{4})\times [-4^\tau,0])\cup (\R^n\times\{t=-4^\sigma\}).
\end{split}
\]
Together with the calculation in \eqref{eq:norm of the operator}, it follows from Lemma \ref{lem:appr} that
\be\label{eq:i+1 diff}
\|W-v_{i+1}\|_{L^{\infty}(\R^n\times [-4^\sigma,0])}=\|W-v_{i+1}\|_{L^{\infty}(B_4\times [-4^\sigma,0])}\le5^{-(\sigma+\al)}.
\ee
Moreover, it follows from \eqref{eq:close app} that
\[
\|v_{i+1}\|_{L^\infty(\R^n\times[-20^\sigma,0])}\le \|W\|_{L^\infty(\R^n\times[-20^\sigma,0])}\le 1,
\]
and thus by Proposition \ref{lem:high regularity} and Proposition \ref{lem:high regularity-1} (see also \eqref{eq:u0 C2})
\[
\begin{split}
&\|\nabla_x v_{i+1}\|_{L^\infty(B_{4-\tau}\times[-4^\sigma+\tau^\sigma,0])}\le c_2\tau^{-1},\\
& \|\nabla^2_x v_{i+1}\|_{L^\infty(B_{4-\tau}\times[-4^\sigma+\tau^\sigma,0])}\le c_2 \tau^{-2},\\
&\|\nabla^3_x v_{i+1}\|_{L^\infty(B_{4-\tau}\times[-4^\sigma+\tau^\sigma,0])}\le c_2\tau^{-3}\ (\text{if }\sigma>1).
\end{split}
\]
Define
\[
w_{i+1}(x,t)=5^{-(\sigma+\al)(i+1)}v_{i+1}(5^{i+1}x,5^{(i+1)\sigma}t).
\]
Then, \eqref{eq:for k}, \eqref{eq:zero outside},  \eqref{eq:close app} and \eqref{eq:est for k} hold for $i+1$. 

Moreover, for $(x,t)\in B_{4-2\tau}\times[-4^\sigma+\tau^\sigma,0]$, we have, similar to \eqref{eq:acc error},
\[
\begin{split}
&|\pa_t W(x,t)-\pa_t v_{i+1}(x,t)-\int_{\R^n}\delta (W(x,y;t)-v_{i+1}(x,y;t))K^{(i+1)}(x,y;t)\ud y|\\
&\le |5^{\al(i+1)}(f(5^{-(i+1)}x,5^{-(i+1)\sigma}t)-f(0,0))|\\
&+\sum_{l=0}^{i+1}\int_{\R^n}5^{(\sigma+\al)(i+1)}|\delta w_l(5^{-(i+1)}x,5^{-(i+1)}y;5^{-(i+1)\sigma}t)|\cdot\\
&\quad\quad\quad\quad\quad\quad|K^{i+1}(0,y;t)-K^{(i+1)}(x,y;t))|\ud y\\
&\le \tau^{-\sigma}\eta_2\le 5^{-(\sigma+\al)}\tau^{-\sigma},
\end{split}
\]
where we used \eqref{eq:choice of eta} in the second inequality. Thus, it follows from \eqref{eq:holder} and \eqref{eq:i+1 diff} that
\[
[W-v_{i+1}]_{C^{\al_1}(B_{4-3\tau}\times[-4^\sigma+2\tau^\sigma,0])}\le 8c_1 5^{-(\sigma+\al)}\tau^{-4}.
\]
Thus, \eqref{eq:gradient close app} hold for $i+1$ as well. This finishes the proof of the claim.
\end{proof}

A corollary of Theorem \ref{thm:linear schauder} would be the Schauder estimates for elliptic equations. If we consider the linear elliptic integro-differential equation
\be \label{eq:elliptic linear}
L u(x)= f(x) \quad \mbox{in }B_5
\ee
where 
\be\label{eq:elliptic linear operator}
L u(x):=\int_{\R^n} \delta u(x,y)K(x,y)\,\ud y,
\ee
$\delta u(x,y)=u(x+y)+u(x-y)-2u(x)$, $K(x,y)\in \mathscr L_2(\lda,\Lda, \sigma)$. We assume that 
\be\label{eq:elliptic holder K} 
\int_{\R^n}|K(x,y)-K(0,y)|\min(|y|^2,r^2)dy \le \Lambda |x|^{\alpha} r^{2-\sigma} 
\ee
for all $r\in (0, 1]$, $x\in B_5$, and 
\be\label{eq:elliptic holder f}
|f(x)-f(0)|\le M_f |x|^\al\quad \mbox{and }|f(x)|\le M_f\quad\forall \ x\in B_5
\ee
for some positive constant $M_f$. 
\begin{cor}\label{thm:elliptic linear schauder}
Let $K\in \mathscr{L}_2(\lda, \Lda, \sigma)$ with $2>\sigma\ge\sigma_0>0$. Let $\al\in (0,1)$ such that $|\sigma+\al-j|\ge\va_0$ for some $\va_0>0$, where $j=1,2,3$. Suppose that \eqref{eq:elliptic holder K} and \eqref{eq:elliptic holder f} hold. If $u$ is a viscosity solution of  \eqref{eq:elliptic linear}, then there exists a polynomial $P(x)$ of degree $[\sigma+\al]$ such that for $x\in B_1$
\be\label{eq:elliptic optimal linear schauder}
\begin{split}
|u(x)-P(x)|&\leq C\left(\|u\|_{L^\infty(\R^n)}+ M_f\right)|x|^{\sigma+\al};\\
|\nabla^j P(0)|&\leq C\left(\|u\|_{L^\infty(\R^n)}+ M_f\right),\ j=0,1, [\sigma+\al],
\end{split}
\ee
where $C$ is a positive constant depending only on $\lda,\Lda, n, \sigma_0, \al$ and $\va_0$.
\end{cor}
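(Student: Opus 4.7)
The strategy is to reduce the elliptic statement to the parabolic Theorem \ref{thm:linear schauder} by viewing $u$ as a time-independent solution of a parabolic equation. Specifically, I would define the extension
\[
\tilde u(x,t) := u(x), \qquad \tilde K(x,y;t) := K(x,y), \qquad \tilde f(x,t) := -f(x),
\]
and check that $\tilde u$ is a viscosity solution of
\[
\tilde u_t(x,t)-\tilde L \tilde u(x,t)=\tilde f(x,t) \quad \text{in } B_5\times(-5^\sigma,0].
\]
This is immediate at the level of smooth test functions: any $C^2$ test function $\varphi(x,t)$ touching $\tilde u$ from above (resp.\ below) at $(x_0,t_0)$ gives $\varphi_t(x_0,t_0)\le 0$ (resp.\ $\ge 0$) by the time-independence, and $x\mapsto \varphi(x,t_0)$ touches $u$ from above (resp.\ below) at $x_0$, so the elliptic viscosity inequality $L\varphi(\cdot,t_0)(x_0)\ge f(x_0)$ (resp.\ $\le$) combines with $\varphi_t(x_0,t_0)\le 0$ (resp.\ $\ge 0$) to yield the required parabolic viscosity inequality.

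Next I would verify that $\tilde K$ and $\tilde f$ satisfy the hypotheses of Theorem \ref{thm:linear schauder}. Clearly $\tilde K \in \mathscr L_2(\lda,\Lda,\sigma)$ since the bounds in \eqref{eq:elliptic}--\eqref{eq:kernel smooth2} are $t$-independent. The condition \eqref{eq:holder K} reduces to \eqref{eq:elliptic holder K} because $\tilde K(x,y;t)-\tilde K(0,y;0)=K(x,y)-K(0,y)$, so the $|t|^{\al/\sigma}$ term can simply be discarded (or absorbed trivially). Similarly \eqref{eq:elliptic holder f} gives \eqref{eq:holder f} for $\tilde f$ with the same constant $M_f$.

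Theorem \ref{thm:linear schauder} then provides a polynomial $P(x)$ of degree $[\sigma+\al]$ such that
\[
|\tilde u(x,0)-P(x)|\le C\bigl(\|\tilde u\|_{L^\infty(\R^n\times(-5^\sigma,0])}+ M_{\tilde f}\bigr)|x|^{\sigma+\al} \quad \text{for } x\in B_1,
\]
together with the control on $|\nabla^j P(0)|$ for $j=0,\dots,[\sigma+\al]$. Since $\tilde u(x,0)=u(x)$ and $\|\tilde u\|_{L^\infty(\R^n\times(-5^\sigma,0])}=\|u\|_{L^\infty(\R^n)}$, and $M_{\tilde f}=M_f$, this is exactly \eqref{eq:elliptic optimal linear schauder}, with the same dependence of the constant $C$ on $\lda,\Lda,n,\sigma_0,\al,\va_0$.

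The only point that needs a moment of care is the viscosity-solution compatibility under the trivial time extension; apart from that the corollary is essentially a definitional unpacking once Theorem \ref{thm:linear schauder} is available, so there is no substantive obstacle.
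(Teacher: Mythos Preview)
Your proposal is correct and matches the paper's intent: the paper states Corollary~\ref{thm:elliptic linear schauder} as an immediate consequence of Theorem~\ref{thm:linear schauder} without giving a separate proof, and the trivial time-extension you describe is precisely the natural way to read off the elliptic estimate from the parabolic one. The verification of the viscosity inequalities and of hypotheses \eqref{eq:holder K}--\eqref{eq:holder f} from \eqref{eq:elliptic holder K}--\eqref{eq:elliptic holder f} is routine, as you note.
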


The constant $C$ in \eqref{eq:elliptic optimal linear schauder} does not blow up as $\sigma\to 2$, but it will blow up as $\sigma+\al$ approaches to integers.

\subsection{H\"older estimates in space-time for spatial derivatives}\label{sec:holder spatial derivatives}

Another corollary of the pointwise Schauder estimate \eqref{eq:optimal linear schauder} is the following uniform (in $t$) interior Schauder estimates in spatial variables.

We say that $K\in \mathscr{L}_2(\lda, \Lda, \sigma, \al)$ if $K\in \mathscr{L}_2(\lda, \Lda, \sigma)$ and 
\be\label{eq:K global holder}
\int_{\R^n}|K(x_1,y,t_1)-K(x_2,y,t_2)|\min(|y|^2,r^2)dy \le \Lambda (|x_1-x_2|^{\alpha} +|t_1-t_2|^{\frac{\al}{\sigma}})r^{2-\sigma} 
\ee
for all $r\in (0,1]$, $\ x_1,x_2\in B_5, t_1,t_2\in (-5^\sigma,0]$. We also assume that
\be\label{eq:f global holder}
|f(x_1,t_1)-f(x_2,t_2)|\le M_f(|x_1-x_2|^\al+|t_1-t_2|^{\frac{\sigma}{\al}}),\quad |f(x_1,t_1)|\le M_f
\ee
for all $\ x_1,x_2\in B_5, t_1,t_2\in (-5^\sigma,0]$ and some positive constant $M_f$.
\begin{cor}\label{cor:interior schauder}
Let $K\in \mathscr{L}_2(\lda, \Lda, \sigma,\al)$ with $2>\sigma\ge\sigma_0>0$. Let $\al\in (0,1)$ such that $|\sigma+\al-j|\ge\va_0$ for some $\va_0>0$, where $j=1,2,3$. Suppose that \eqref{eq:f global holder} holds. If $u$ is a viscosity solution of \eqref{eq:linear}, then $u(\cdot, t)\in C_x^{\sigma+\al}(B_1)$ for all $t\in (-1,0]$, and there exists a constant $C$ depending only on $\lda,\Lda, n, \sigma_0, \al, \va_0$, such that
\be\label{eq:optimal linear schauder1}
\sup_{t\in [-1,0]}\|u(\cdot,t)\|_{C^{\sigma+\al}(B_{1})}\leq C\left(\|u\|_{L^\infty(\R^n\times(-5^\sigma,0])}+ M_f\right).
\ee
\end{cor}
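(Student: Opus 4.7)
The plan is to reduce estimate \eqref{eq:optimal linear schauder1} to the pointwise estimate \eqref{eq:optimal linear schauder} of Theorem \ref{thm:linear schauder} by translating the equation so as to center it at an arbitrary point $(x_0,t_0)\in B_1\times(-1,0]$, and then to promote the resulting family of pointwise polynomial approximations to a uniform $C^{\sigma+\al}$ bound via the standard Campanato-type characterization of H\"older spaces.

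First, for fixed $(x_0,t_0)\in B_1\times(-1,0]$, choose $\rho=\rho(\sigma_0)\in(0,1)$ small enough that $B_{5\rho}(x_0)\subset B_5$ and $t_0-(5\rho)^\sigma>-5^\sigma$ uniformly in $(x_0,t_0)\in B_1\times(-1,0]$. Define
$$\tilde u(x,t)=u(x_0+\rho x,\,t_0+\rho^\sigma t),\qquad \tilde K(x,y;t)=\rho^{n+\sigma}K(x_0+\rho x,\rho y;t_0+\rho^\sigma t),$$
$$\tilde f(x,t)=\rho^\sigma f(x_0+\rho x,\,t_0+\rho^\sigma t).$$
Then $\tilde u$ is a viscosity solution of $\tilde u_t-\tilde L\tilde u=\tilde f$ in $B_5\times(-5^\sigma,0]$. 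The identical change of variables used in Step 1 of the proof of Theorem \ref{thm:linear schauder} shows that $\tilde K\in\mathscr L_2(\lda,\Lda,\sigma)$, that the global H\"older hypothesis \eqref{eq:K global holder} implies the pointwise condition \eqref{eq:holder K} for $\tilde K$ at the new origin with constant bounded by $\Lda$ (picking up a harmless factor $\rho^\al\le 1$), and that \eqref{eq:f global holder} implies \eqref{eq:holder f} for $\tilde f$ with $\tilde M_f\le C(M_f+\|u\|_{L^\infty})$.

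Next, applying Theorem \ref{thm:linear schauder} to $\tilde u$ produces a polynomial $\tilde P$ of degree $[\sigma+\al]$ with
$$|\tilde u(x,0)-\tilde P(x)|\le C(\|u\|_{L^\infty}+M_f)\,|x|^{\sigma+\al}\text{ for }x\in B_1,\quad |\nabla^j\tilde P(0)|\le C(\|u\|_{L^\infty}+M_f).$$
Undoing the rescaling yields a polynomial $P_{x_0,t_0}$ of degree $[\sigma+\al]$ satisfying
$$|u(x,t_0)-P_{x_0,t_0}(x)|\le C(\|u\|_{L^\infty}+M_f)\,|x-x_0|^{\sigma+\al}\quad\text{for all }x\in B_\rho(x_0),$$
together with $|\nabla^j P_{x_0,t_0}(x_0)|\le C(\|u\|_{L^\infty}+M_f)$ for $j=0,\ldots,[\sigma+\al]$, where the constant $C$ depends only on $\lda,\Lda,n,\sigma_0,\al,\va_0$ and, crucially, is independent of the choice of $(x_0,t_0)\in B_1\times(-1,0]$.

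Finally, I invoke the classical Campanato-type characterization: a continuous function $v$ on $B_1$ satisfies $\|v\|_{C^{\sigma+\al}(B_1)}\le C'C_0$ provided that for every $x_0\in B_1$ there exists a polynomial $P_{x_0}$ of degree $[\sigma+\al]$ with $|\nabla^j P_{x_0}(x_0)|\le C_0$ and $|v(x)-P_{x_0}(x)|\le C_0|x-x_0|^{\sigma+\al}$ on a uniform-size neighborhood of $x_0$. Applied to $v=u(\cdot,t_0)$, this gives $\|u(\cdot,t_0)\|_{C^{\sigma+\al}(B_1)}\le C(\|u\|_{L^\infty}+M_f)$ with a constant independent of $t_0$; taking the supremum over $t_0\in[-1,0]$ yields \eqref{eq:optimal linear schauder1}. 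The main obstacle is purely bookkeeping: carefully verifying that the pointwise-at-$(0,0)$ hypotheses required by Theorem \ref{thm:linear schauder} genuinely follow from the \emph{global} hypotheses \eqref{eq:K global holder}--\eqref{eq:f global holder} after translation and rescaling, and matching constants; some mild additional care is needed in the regime $\sigma+\al>2$ so that the degrees of the approximating polynomials are consistent with the derivatives of $u$ appearing in the $C^{\sigma+\al}$ norm.
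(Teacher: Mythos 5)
Your proof is correct and is exactly what the paper intends: the paper's own ``proof'' of this corollary is the one-line remark that it ``follows from Theorem~\ref{thm:linear schauder} and standard translation arguments,'' and your argument supplies precisely those details (recenter at $(x_0,t_0)$, rescale by a small $\rho=\rho(\sigma_0)$ to stay inside the original cylinder, check that the global H\"older hypotheses on $K$ and $f$ descend to the pointwise ones at the new origin, apply Theorem~\ref{thm:linear schauder}, and then pass from the uniform family of pointwise polynomial approximations to the $C^{\sigma+\al}$ norm via the Campanato characterization, valid since $\sigma+\al\notin\mathbb Z$). One small cosmetic point: $\tilde M_f\le \rho^{\sigma+\al}M_f\le M_f$, so the $\|u\|_{L^\infty}$ term in your bound for $\tilde M_f$ is unnecessary, though harmless.
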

Corollary \ref{cor:interior schauder} follows from Theorem \ref{thm:linear schauder} and standard translation arguments. Once we know the optimal regularity estimates of $\nabla_x u$ or $\nabla_x^2 u$ in the spatial variables, we can also obtain their regularity estimates in the time variable.

We say that $K\in \mathscr{L}_3(\lda, \Lda, \sigma, \al)$ if $K\in \mathscr{L}_2(\lda, \Lda, \sigma, \al)$ and $|\nabla^3_y K(x,y;t)|\le \Lda |y|^{-n-\sigma-3}.$

\begin{cor}\label{cor:gradient holder}
Let $K\in \mathscr{L}_2(\lda, \Lda, \sigma,\al)$ with $2>\sigma\ge\sigma_0>0$. Let $\al\in (0,1)$ such that $|\sigma+\al-j|\ge\va_0$ for some $\va_0>0$, where $j=1,2,3$. Suppose that  \eqref{eq:f global holder} holds. If $u$ is a viscosity solution of \eqref{eq:linear}, then
\be\label{eq:lipschitz}
|u(x_1 ,t_1)-u(x_2 ,t_2)|\le C_1\left(\|u\|_{L^\infty(\R^n\times(-5^\sigma,0])}+ M_f\right)(|t_1-t_2|+|x_1-x_2|)
\ee
for all $x_1,x_2\in B_1$, $t_1,t_2\in (-1,0]$; if $1<\sigma+\al<2$, there holds
\be\label{eq:gradient holder in time}
\|\nabla_x u\|_{C_{x,t}^{\sigma+\al-1,\frac{\sigma+\al-1}{\sigma}}(B_1\times[-1,0])}\leq C_2\left(\|u\|_{L^\infty(\R^n\times(-5^\sigma,0])}+ M_f\right);
\ee
if $\sigma+\al>2$ and $K\in \mathscr{L}_3(\lda, \Lda, \sigma, \al)$, there hold \eqref{eq:gradient holder in time} and
\be\label{eq:hessian holder in time}
\|\nabla^2_x u\|_{C_{x,t}^{\sigma+\al-2,\frac{\sigma+\al-2}{\sigma}}(B_1\times[-1,0])}\leq C_3\left(\|u\|_{L^\infty(\R^n\times(-5^\sigma,0])}+ M_f\right),
\ee
where $ C_1, C_2, C_3$ are positive constants depending only on $\lda,\Lda, n, \sigma_0, \al, \va_0$.
\end{cor}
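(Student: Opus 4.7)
The plan is to apply the pointwise Schauder estimate of Theorem \ref{thm:linear schauder}, translated to every base point $(x_0,t_0)$ via the global H\"older hypotheses $K \in \mathscr{L}_2(\lda,\Lda,\sigma,\al)$ and \eqref{eq:f global holder}, in tandem with a time-Lipschitz control of $u$ derived from the equation, and to compare the spatial Taylor polynomials at two times on the natural parabolic scale $r = |t_1-t_2|^{1/\sigma}$.

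For \eqref{eq:lipschitz}, Corollary \ref{cor:interior schauder} gives $u(\cdot,t) \in C_x^{\sigma+\al}(B_1)$ uniformly in $t$. The elementary bound $|\delta u(x,y;t)| \le C\min(|y|^{\sigma+\al},|y|^2)$, combined with $|K(x,y;t)| \le \Lda|y|^{-n-\sigma}$ and the $L^\infty$ control at infinity, ensures $\|Lu\|_{L^\infty(B_1\times(-1,0])} \le C(\|u\|_\infty + M_f)$. The equation then forces $u_t = Lu + f \in L^\infty$, so $u$ is Lipschitz in $t$; combined with the spatial $C^1_x$ bound (available once $\sigma+\al \ge 1$), this yields \eqref{eq:lipschitz}.

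For \eqref{eq:gradient holder in time} and \eqref{eq:hessian holder in time}, fix $x_0 \in B_1$, $t_1,t_2 \in (-1,0]$, and set $r = |t_1-t_2|^{1/\sigma}$ (assumed small). Translating Theorem \ref{thm:linear schauder} to $(x_0,t_i)$ yields polynomials $P_i(y)$ of degree $[\sigma+\al]$ whose coefficients are the spatial Taylor coefficients of $u$ at $(x_0,t_i)$, with $|u(x_0+y,t_i) - P_i(y)| \le C|y|^{\sigma+\al}$ for $y \in B_1$. The essential tool is a parabolic Taylor refinement
\[
|u(x_0+y,t_0+s) - P_{(x_0,t_0)}(y) - s\,u_t(x_0,t_0)| \le C\bigl(|y|^{\sigma+\al} + |s|^{(\sigma+\al)/\sigma}\bigr)
\]
on a parabolic cylinder about $(x_0,t_0)$, where $u_t(x_0,t_0) = Lu(x_0,t_0)+f(x_0,t_0)$ is well-defined by the preceding paragraph. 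Granted this refinement, the polynomial $Q(y) := P_1(y) - P_2(y) - (t_1-t_2)u_t(x_0,t_2)$ satisfies $\sup_{B_r}|Q| \le C r^{\sigma+\al}$, and the polynomial inverse (Markov-type) estimate $|\nabla^j Q(0)| \le C r^{-j}\sup_{B_r}|Q|$ for polynomials of degree $[\sigma+\al]$ yields
\[
|\nabla^j_x u(x_0,t_1) - \nabla^j_x u(x_0,t_2)| \le C r^{\sigma+\al-j} = C|t_1-t_2|^{(\sigma+\al-j)/\sigma},
\]
which is \eqref{eq:gradient holder in time} for $j=1$ (when $1<\sigma+\al<2$) and \eqref{eq:hessian holder in time} for $j=2$ (when $\sigma+\al>2$, the extra $K \in \mathscr{L}_3(\lda,\Lda,\sigma,\al)$ being invoked so that Proposition \ref{lem:high regularity} extends to $C^3$ regularity of the approximating $w_l$'s). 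The spatial H\"older parts of both estimates come directly from Corollary \ref{cor:interior schauder}.

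The main obstacle is establishing the parabolic Taylor refinement. Although Theorem \ref{thm:linear schauder} is stated as a spatial pointwise bound at $t=0$, its proof controls $\|u - \sum_{l \le i} w_l\|_{L^\infty}$ on the full parabolic cylinders $\R^n \times [-4^\sigma 5^{-i\sigma},0]$. Each $w_l$ solves a constant-coefficient parabolic equation and, by the parabolic scaling built into Proposition \ref{lem:high regularity}, has time derivative of order $5^{-\al l}$ in $L^\infty$, so the series $\sum_l \pa_t w_l$ converges absolutely. A summation argument analogous to \eqref{eq:smooth closeness}, but tracking both the spatial Taylor jets and the time-linear contribution of the $w_l$'s, produces the claimed refinement with remainder $O(|s|^{(\sigma+\al)/\sigma})$, completing the proof.
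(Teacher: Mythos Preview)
Your argument for \eqref{eq:lipschitz} is essentially correct (modulo the routine justification that a viscosity solution with $C^{\sigma+\al}_x$ regularity satisfies the equation classically, so that $u_t=Lu+f$ pointwise).

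The serious problem is the ``parabolic Taylor refinement''
\[
|u(x_0+y,t_0+s)-P_{(x_0,t_0)}(y)-s\,u_t(x_0,t_0)|\le C\bigl(|y|^{\sigma+\al}+|s|^{(\sigma+\al)/\sigma}\bigr),
\]
which at $y=0$ asserts $u_t$ is $C^{\al/\sigma}$ in $t$. This is false in general. Indeed $u_t=Lu+f$, and $Lu(x_0,t)$ contains the tail $\int_{|y|>R}\delta u(x_0,y;t)K(x_0,y;t)\,\ud y$, which depends on $u(\cdot,t)$ outside any fixed ball; since the exterior data of $u$ is merely $L^\infty$ with no time regularity whatsoever, $Lu(x_0,\cdot)$ and hence $u_t(x_0,\cdot)$ need not be H\"older in $t$. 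This is precisely the obstruction flagged in the introduction (example~2.4.1 of \cite{LD2}). Your summation sketch does not rescue the refinement either: to get the middle term $\sum_{l\le i}\bigl(w_l(0,s)-w_l(0,0)-s\,\pa_t w_l(0,0)\bigr)$ of the right order you would need $\pa_t^2 w_l$ (or $\pa_t w_l$ Lipschitz in $t$), but $\pa_t w_l=L_0 w_l$ again involves $w_l$ at infinity, where $w_l$ equals the rough exterior data of $u$, so no such bound is available.

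The paper avoids this by never asking for time regularity of $u_t$. It bounds $|\nabla_x u(0,t)-\nabla_x u(0,0)|$ directly via the splitting $I_1+I_2+I_3$ through $\sum_{l\le i}\nabla_x w_l$. The key point is that $\nabla_x w_l$ \emph{is} Lipschitz in $t$ (Remark~\ref{rem:1}): after multiplying by a cutoff and integrating by parts as in Proposition~\ref{lem:high regularity}, the function $\nabla_x(\eta w_l)$ has compact spatial support, so its governing equation has a bounded right-hand side and Lemma~\ref{lem:lip in time} applies. The term $I_1$ (the analogue of your ``refinement'' for $\nabla_x u$ rather than $u$) is then handled by evaluating a triangle inequality at a spatial point with $|x|\sim 5^{-i}$, using only the already-established estimates \eqref{eq:close app}, \eqref{eq:est for k} and \eqref{eq:optimal linear schauder}. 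No control of $u_t$ in $t$ is needed.
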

In particular, the constants $C, C_1,C_2,C_3$ in \eqref{eq:optimal linear schauder1}-\eqref{eq:hessian holder in time} do not blow up as $\sigma\to 2^-$.

\begin{lem}\label{lem:lip in time}
Let $v\in C(\R^n\times [-5^\sigma,0])$ satisfies \eqref{eq:u0} in viscosity sense. Then $v$ is locally Lipschitz in time. Moreover, for $t\in [-1,0]$ there holds
\[
|v(0,t)-v(0,0)|\le C\left(\|v\|_{L^\infty(\R^n\times [-4^\sigma,0])}+|f(0,0)|\right)|t|.
\]
\end{lem}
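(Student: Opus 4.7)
The plan is to exploit the spatial regularity of $v$---available because $v$ solves a translation-invariant equation with the constant right-hand side $f(0,0)$---to produce a pointwise bound on $L_0 v(0,t)$, and then to convert this bound into Lipschitz continuity in time via the equation itself.

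First, I would apply Proposition~\ref{lem:high regularity} (when $\sigma>1$) or Proposition~\ref{lem:high regularity-1} (when $\sigma\le 1$), with $i=2$ and with the constant source $g\equiv f(0,0)$, so that $\sup_t\|g(\cdot,t)\|_{C^2_x}\le|f(0,0)|$. Because these propositions are stated for equations holding on $B_8\times(-8^\sigma,0]$ while \eqref{eq:u0} holds only on $B_4\times(-4^\sigma,0]$, a harmless dilation $\tilde v(x,t)=v(x/2,t/2^\sigma)$---which preserves the class $\mathscr{L}_2(\lda,\Lda,\sigma)$ up to constants---puts the equation into the required form. The outcome is a uniform spatial estimate
\[
\sup_{t\in[-\delta^\sigma,0]}\|v(\cdot,t)\|_{C^{1+\beta}(B_\delta)}\le C\bigl(\|v\|_{L^\infty(\R^n\times[-4^\sigma,0])}+|f(0,0)|\bigr)
\]
for some $\delta>0$ and some $\beta>\max(0,\sigma-1)$ depending only on $n,\lda,\Lda,\sigma_0$.

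Second, I would bound $L_0 v(0,t)$ pointwise by splitting the integral at $|y|=\delta/2$. On the near field, the symmetry of the second difference combined with the $C^{1+\beta}$ spatial regularity (so that the linear term in the Taylor expansion cancels) gives $|\delta v(0,y;t)|\le C|y|^{1+\beta}$, which is integrable against $K(0,y;0)\le (2-\sigma)\Lda|y|^{-n-\sigma}$ precisely because $1+\beta>\sigma$. On the far field the trivial bound $|\delta v|\le 4\|v\|_\infty$ combined with the tail integrability of $|y|^{-n-\sigma}$ handles the contribution. Together these yield
\[
\sup_{t\in[-\delta^\sigma,0]}|L_0 v(0,t)|\le C\bigl(\|v\|_{L^\infty(\R^n\times[-4^\sigma,0])}+|f(0,0)|\bigr).
\]

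Finally, to pass from this pointwise estimate to a Lipschitz bound on $v(0,\cdot)$ I would reuse, almost verbatim, the mollification-and-stability argument deployed earlier in the proof of Theorem~\ref{thm:linear schauder} around \eqref{eq:viscosity a bit}: mollify $v$ in space-time to obtain $v_\va$, solve the translation-invariant equation with boundary data $v_\va$ to get a smooth approximant whose time derivative is H\"older continuous by Theorem~4.1 of~\cite{LD2}, transfer the Step~2 bound on $L_0$ (which is inherited by the approximants because their spatial regularity is uniformly controlled) to an $L^\infty$ bound on the approximant's $\pa_t$, and then send $\va\to 0$ using the stability result Theorem~5.3 of~\cite{LD2}. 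Integrating in $t$ yields the desired Lipschitz bound on $[-\delta^\sigma,0]$, and a covering of $[-1,0]$ by finitely many time-translated windows extends it to the full range $t\in[-1,0]$. The main obstacle is precisely this last conversion from a viscosity-sense identity into a classical pointwise Lipschitz bound on $v(0,\cdot)$, but all the necessary tools---mollification, stability of viscosity solutions, and the $C^1$-in-time regularity for translation-invariant equations---are already in place in the paper.
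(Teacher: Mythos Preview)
Your proposal is correct and follows essentially the same route as the paper: obtain uniform spatial regularity from Propositions~\ref{lem:high regularity}/\ref{lem:high regularity-1}, use it to bound $L_0$ pointwise, and justify the classical use of the equation through the mollification--stability argument (Theorem~4.1 and Theorem~5.3 of \cite{LD2}). The paper streamlines this slightly by working with the approximants $v_\va$ from the outset---getting $C^2_x$ regularity for $v_\va$, bounding $\|\pa_t v_\va\|_{L^\infty}\le \|L_0 v_\va\|_{L^\infty}+|f(0,0)|$ directly on $B_1\times[-1,-t_0]$, and sending $\va\to 0$, $t_0\to 0$---so your separate Step~2 for $v$ itself and the final time-covering are redundant but harmless.
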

By using \eqref{eq:u0 C2} and the equation \eqref{eq:u0} itself, this estimate is clear if we consider it as a priori estimate.
\begin{proof}[Proof of Lemma \ref{lem:lip in time}]
Let $t_0\in (0,4^\sigma)$. To proceed, we smooth $v$ by using a mollifier $\eta_\va(x,t)$, and let $g_\va=\eta_\va* v$. Let $v_\va$ be the solution of
\[
\begin{split}
\pa_t v_\va-L_0 v_\va&=f(0,0) \quad\mbox{in }B_4\times(-4^\sigma,-t_0]\\
v_\va&=g_\va\quad\mbox{in }((\R^n\setminus B_4)\times[-4^\sigma,-t_0])\cup (\R^n\times \{t=-4^\sigma\}).
\end{split}
\]
It follows from Theorem 4.1 in \cite{LD2} that $\pa_t v_{\va}$ is H\"older continuous in space-time. By Proposition \ref{lem:high regularity} and Proposition \ref{lem:high regularity-1}, we know that $v_\va$ is $C^2$ in $x$. Thus, $v_\va$ satisfies its equation in the classical sense. By the equation of $v_\va$, 
\[
\begin{split}
&\|\pa_t v_\va\|_{L^\infty(B_1\times[-1,-t_0])}\\
&\quad\le \|L_0 v_\va\|_{L^\infty(B_1\times[-1,-t_0])}+|f(0,0)|\\
&\quad\le C (\|\nabla^2_xv_\va\|_{L^\infty(B_3\times[-1,-t_0])}+\|v_\va\|_{L^\infty(\R^n\times[-1,-t_0])}+|f(0,0)|)\\
&\quad\le C\left(\|v_\va\|_{L^\infty(\R^n\times [-4^\sigma,-t_0])}+|f(0,0)|\right)\\
&\quad\le C\left(\|v\|_{L^\infty(\R^n\times [-4^\sigma,-t_0])}+|f(0,0)|\right),
\end{split}
\]
where the estimates in Proposition \ref{lem:high regularity} and Proposition \ref{lem:high regularity-1} are used in the third inequality. Meanwhile, by the H\"older interior estimates, we have that $v_\va$ locally uniformly converges to some continuous function $w$. By the stability result Theorem 5.3 in \cite{LD2}, $w$ is a viscosity solution of 
\[
\begin{split}
\pa_t w-L_0 w &=f(0,0) \quad\mbox{in }B_4\times(-4^\sigma,-t_0]\\
w&=v\quad\mbox{in }((\R^n\setminus B_4)\times[-4^\sigma,-t_0])\cup (\R^n\times \{t=-4^\sigma\}).
\end{split}
\]
Hence $w\equiv v$, and thus, we have that
\[
\begin{split}
|v(0,t)-v(0,-t_0)|&=\lim_{\va\to 0}|v_\va(0,t)-v_\va(0,-t_0)|\\
&\le C\left(\|v\|_{L^\infty(\R^n\times [-4^\sigma,0])}+|f(0,0)|\right)|t+t_0|.
\end{split}
\]
We finish the proof by sending $t_0\to 0$. 
\end{proof}
\begin{rem}\label{rem:1}
Indeed, by similar arguments and the integration by parts technique used in the proof of Proposition \ref{lem:high regularity} one can also show that $\nabla_x v$ is Lipschitz in time, as well as $\nabla^2_x v$ if $K\in \mathscr{L}_3(\lda, \Lda, \sigma, \al)$ and $\sigma>1$ (so that we have estimates for $\nabla_x^4 v$).  We omit the proof here.
\end{rem}

\begin{proof}[Proof of Corollary \ref{cor:gradient holder}] If we let $w_l$ be as in the proof of Theorem \ref{thm:linear schauder}, then by Lemma \ref{lem:lip in time} and Remark \ref{rem:1}, we have that $w_l, \nabla_x w_l$ is Lipschitz in time, as well as $\nabla^2_x w_l$ provided that $ K\in \mathscr{L}_3(\lda, \Lda, \sigma, \al)$. By Corollary \ref{cor:interior schauder}, we may assume that $x_1=x_2=0, t_2=0.$ Suppose that $-5^{i\sigma}\le t<-5^{(i+1)\sigma}$. Then we have
\[
\begin{split}
&| u(0,t)- u(0,0)|\\
&\le| u(0,t)-\sum_{l=0}^i  w_l(0,t)|+| u(0,0)-\sum_{l=0}^i w_l(0,0)|+ \sum_{l=0}^i| w_l(0,t)-  w_l(0,0)|\\
&=2\cdot 5^{-(\sigma+\al)(i+1)}+C\sum_{l=0}^i 5^{-\al l}|t|\le C_1|t|,
\end{split}
\]
which proves \eqref{eq:lipschitz}. 

Suppose that $1<\sigma+\al<2$. We have
\[
\begin{split}
|\nabla_x u(0,t)-\nabla_x u(0,0)|&\le|\nabla_x u(0,t)-\sum_{l=0}^i \nabla_x w_l(0,t)|\\
&\quad+\sum_{l=0}^i|\nabla_x w_l(0,t)- \nabla_x w_l(0,0)|\\
&\quad+|\nabla_x u(0,0)-\sum_{l=0}^i \nabla_x w_l(0,0)|:=I_1+I_2+I_3.
\end{split}
\]
Since $\nabla_x u(0,0)=\sum_{l=0}^\infty \nabla_x w_l(0,0)$, we have
\[
 |I_3|\leq \sum_{l=i+1}^\infty c_2 5^{-(\sigma+\al-1)l}\le c_2|t|^{\frac{\sigma+\al-1}{\sigma}}\frac{1}{1-5^{-(\sigma+\al-1)}}.
\]
By the equation of $w_l$, Lemma \ref{lem:lip in time}, Remark \ref{rem:1} and \eqref{eq:est for k}, we have
\[
 |\nabla_x w_l(0,t)-\nabla_x w_l(0,0)|\le C 5^{(1-\al)l}|t|.
\]
Then
\[
\begin{split}
 |I_2|\le  C|t|\frac{5^{(1-\al)(i+1)}}{5^{1-\al}-1}\le C|t|^\frac{\sigma+\al-1}{\sigma} \frac{5^{1-\al}}{5^{1-\al}-1}.
\end{split}
\]
Meanwhile, it follows from the estimate \eqref{eq:optimal linear schauder} that
\[
 |u(x,t)-u(0,t)-\nabla_xu(0,t)x|\le C|x|^{\sigma+\al}.
\]
For $5^{-(i+1)}\le |x|<5^{-i}$, we have, by triangle inequality, 
\[
 \begin{split}
 &|\nabla_x u (0,t)x-\sum_{l=0}^i\nabla_x w_l(0,t)x|\\
 &\le |u(x,t)-\sum_{l=0}^i w_l(x,t)|+|\sum_{l=0}^i (w_l(x,t)-w_l(0,t)-\nabla_x w_l(0,t)x)|\\
&\quad+|u(0,t)+\nabla_x u(0,t)x-u(x,t)|+|\sum_{l=0}^i w_l(0,t)-u(0,t)|\\
&\le 5^{-(\sigma+\al)(i+1)}+c_2\sum_{l=0}^i 5^{-(\sigma+\al-2)l}|x|^2+ C|x|^{\sigma+\al}+5^{-(\sigma+\al)(i+1)}\\
%&\le 5^{-(\sigma+\al)(i+1)}+c_2\frac{5^{-(\sigma+\al)(i+1)}}{5^{2-\sigma-\al}-1}+ C5^{-(\sigma+\al)i}+5^{-(\sigma+\al)(i+1)}.
 \end{split}
\]
Thus
\[
 \begin{split}
 |I_1|=|\nabla_x u (0,t)-\sum_{l=0}^i\nabla_x w_l(0,t)|&\le (C+\frac{1}{5^{2-\sigma-\al}-1})5^{-(\sigma+\al-1)(i+1)}\\
&\le (C+\frac{1}{5^{2-\sigma-\al}-1})|t|^{\frac{\sigma+\al-1}{\sigma}}.
 \end{split}
\]
Hence, we have shown that
\[
 |\nabla_x u(0,t)-\nabla_x u(0,0)|\le C_2|t|^\frac{\sigma+\al-1}{\sigma}.
\]

Suppose that $2<\sigma+\al<3$. We have
\[
\begin{split}
|\nabla^2_x u(0,t)-\nabla^2_x u(0,0)|&\le|\nabla^2_x u(0,t)-\sum_{l=0}^i \nabla^2_x w_l(0,t)|\\
&\quad+\sum_{l=0}^i|\nabla^2_x w(0,t)- \nabla^2_x w_l(0,0)|\\
&\quad+|\nabla^2_x u(0,0)-\sum_{l=0}^i \nabla^2_x w_l(0,0)|:=II_1+II_2+II_3.
\end{split}
\]
Since $\nabla^2_x u(0,0)=\sum_{l=0}^\infty \nabla^2_x w_l(0,0)$, we have
\[
 |II_3|\leq c_2|t|^{\frac{\sigma+\al-2}{\sigma}}\frac{1}{1-5^{-(\sigma+\al-2)}}.
\]
Meanwhile, it follows from the estimate \eqref{eq:optimal linear schauder} that
\[
 | u(x,t)- u(0,t)-\nabla_x u(0,t)x-\frac 12 x^T \nabla^2_x u(0,t)x|\le C|x|^{\sigma+\al}.
\]
By triangle inequality and the estimate for $I_1$, we have, for $5^{-(i+1)}\le |x|<5^{-i}$
\[
 \begin{split}
 &\frac 12 |x^T\nabla^2_x u (0,t)x-x^T\sum_{l=0}^i\nabla^2_x w_l(0,t)x|\\
&\le |u(x,t)-\sum_{l=0}^i w_l(x,t)|+|u(0,t)+\nabla_x u(0,t)x+\frac 12 x^T \nabla^2_x u(0,t)x-u(x,t)|\\
&\quad+|\sum_{l=0}^i (w_l(x,t)-w_l(0,t)-\nabla_x w_l(0,t)x-\frac 12 x^T \nabla^2_x w_l(0,t)x)|\\
&\quad+|\sum_{l=0}^i w_l(0,t)-u(0,t)|+|\sum_{l=0}^i\nabla_x w_l(0,t)x-\nabla_x u (0,t)x|\\
&\le 5^{-(\sigma+\al)(i+1)}+c_2\sum_{l=0}^i 5^{-(\sigma+\al-3)l}|x|^3+ C|x|^{\sigma+\al}+5^{-(\sigma+\al)(i+1)}\\
&\le 2\cdot 5^{-(\sigma+\al)(i+1)}+c_2\frac{5^{-(\sigma+\al)(i+1)}}{5^{2-\sigma-\al}-1}+ C5^{-(\sigma+\al)i}+(C+\frac{1}{5^{2-\sigma-\al}-1})5^{-(\sigma+\al)(i+1)}.
 \end{split}
\]
Thus
\[
 \begin{split}
 |II_1|=|\nabla_x u (0,t)-\sum_{l=0}^i\nabla_x w_l(0,t)|&\le C5^{-(\sigma+\al-2)(i+1)}\le C|t|^{\frac{\sigma+\al-2}{\sigma}}.
 \end{split}
\]
By the equation of $w_l$, Lemma \ref{lem:lip in time}, Remark \ref{rem:1}   and \eqref{eq:est for k}, we have
\[
 |\nabla^2_x w_l(0,t)-\nabla^2_x w_l(0,0)|\le C_3 5^{(2-\al)l}|t|
\]
provided that $ K\in \mathscr{L}_3(\lda, \Lda, \sigma, \al)$.
Then
\[
\begin{split}
 |I_2|\le  C|t|\frac{5^{(2-\al)(i+1)}}{5^{2-\al}-1}\le C|t|^\frac{\sigma+\al-2}{\sigma} \frac{5^{2-\al}}{5^{2-\al}-1}.
\end{split}
\]
Thus, by combining the estimates for $II_1, II_2, II_3$, we have that
\[
 |\nabla^2_x u(0,t)-\nabla^2_x u(0,0)|\le C|t|^\frac{\sigma+\al-2}{\sigma}.
\]
This completes the proof of Corollary \ref{cor:gradient holder}.
\end{proof}
If we do not assume $ K\in \mathscr{L}_3(\lda, \Lda, \sigma, \al)$ when $\sigma+\al>2$, we have that $\nabla^2_x u$ is of $C^\beta$ in the time variable for some $\beta>0$. This is because $\nabla_x^2 u$ is H\"older continuous in $x$ and $\nabla_x u$ is H\"older continuous in $t$, which implies that $\nabla_x^2 u$ is H\"older continuous in $t$ as well; see Lemma 3.1 on page 78 in \cite{LSU}.

\appendix

\section{Approximation lemmas}
Our proof of Schauder estimates uses perturbative arguments, and we need the following two approximation lemmas, which are variants of Theorem 5.6 in \cite{LD2} (Lemma 7 in \cite{CS10} in elliptic cases). We will do a few modifications for our own purposes, and we decide to include them in this appendix for completeness and convenience.  If it is just for our particular linear equations, those approximation lemmas can be simplified much. But we would like to include nonlinear equations as well in this step.  

To start with, we recall some definitions and notations about nonlocal operators, which can be found in \cite{LD1,LD2} for parabolic cases and in \cite{CS09,CS10} for elliptic cases.  Let $\sigma_0\in (0,2)$ be fixed, and $\omega(y)=(1+|y|^{n+\sigma_0})^{-1}$. We say $u\in L^1(\R^n,\omega)$ if $\int_{\R^n}|u(y)|\omega(y)\ud y<\infty.$  We say that $u\in C(a,b;L^1(\omega))$ if $u(\cdot, t)\in L^1(\R^n,\omega)$ for every $t\in (a,b)$, and $\|u(\cdot, t_1)-u(\cdot,t_2)\|_{L^1(\R^n,\omega)}\to 0$ as $t_1\to t_2^-$ for every $t_2\in (a,b)$, and we denote $\|u\|_{C(a,b;L^1(\omega))}=\sup_{t\in (a,b)}\|u(\cdot, t)\|_{L^1(\R^n,\omega)}$.

Nonlocal (continuous) operators $I$ are defined as ``black boxes" in Definitions 3.3 and 3.6 in \cite{LD1} such that, rough speaking, if $u$ is a test function at $(x,t)$, the $Iu$ is continuous near $(x,t)$. In our case, they are just linear operators of the form \eqref{eq:linear operator} with some continuity assumptions on $K$ in $(x,t)$. Sometimes, we also write $Iu(x,t)$ as $I(u,x,t)$ for convenience especially when dealing with $I(u+v)$.

An operator is translation invariant  if $\tau_{(z,s)} Iu=I(\tau_{(z,s)} u)$ where $\tau_{(z,s)}$ is the translation operator $\tau_{(z,s)}u(x,t)=u(x-z,t-s)$.

Given such a nonlocal operator $I$ defined on $\Omega\times (-T,0]$,  a norm $\|I\|$ was defined in Definition 5.3 in \cite{LD2}.
%\[
 %\begin{split}
  %\|I(t)\|:=& \sup\{|Iu(x,t)|/(1+M):x\in\Omega,  u(\cdot,t) \in L^1(\R^n,\omega)\cap C^{1,1}(x),\\
 % & u(y,t)\le M|y|^{1/2} \mbox{ for every }y\in \R^n\setminus B_1(x),\\
 %         &|u(x+y,t)-u(x,t)-y\cdot \nabla_x u(x,t)|\le M|y|^2 \mbox{ for every }y\in B_1\},
 %\end{split}
%\]
%and $
%\|I\|=\sup_{t\in (-T,0]}\|I(t)\|.$ 
Here, we also define a (weaker) norm $\|I\|_{*}$ for our own purpose, 
\be\label{eq:norm star}
 \begin{split}
  \|I(t)\|_*:=& \sup\{|Iu(x,t)|/(1+M):x\in\Omega,  \|u(\cdot,t)\|_{\R^n}\le M\\
          &|u(x+y,t)-u(x,t)-y\cdot \nabla_x u(x,t)|\le M|y|^2 \mbox{ for every }y\in B_1\},
 \end{split}
\ee
and $
\|I\|_*=\sup_{t\in (-T,0]}\|I(t)\|_*.$ 

We say that a nonlocal operator $I$ is uniformly elliptic with respect to $\mathscr{L}_0(\lda,\Lda,\sigma)$, which will be written as $\mathscr{L}_0(\sigma)$ for short, if
\be\label{eq:elliptic operator}
\mathcal M^-_{\mathscr L_0(\sigma)} v(x,t)\le I(u+v,x,t)-I(u,x,t)\le \mathcal  M^+_{\mathscr L_0(\sigma)} v(x,t),
\ee
where
\[
%\begin{split}
 \mathcal M^-_{\mathscr L_0(\sigma)} v(x,t)=\inf_{L\in L_0(\sigma)}Lv(x,t)=(2-\sigma)\int_{\R^n}\frac{\lda\delta v(x,y;t)^+-\Lda\delta v(x,y;t)^-}{|y|^{n+\sigma}}\ud y
 \]
 \[
\mathcal M^+_{\mathscr L_0(\sigma)} v(x,t)=\sup_{L\in L_0(\sigma)}Lv(x,t)=(2-\sigma)\int_{\R^n}\frac{\Lda\delta v(x,y;t)^+-\lda\delta v(x,y;t)^-}{|y|^{n+\sigma}}\ud y.
%\end{split}
\]
It is also convenient to define the limit operators when $\sigma\to 2$ as
\[
 \begin{split}
   \mathcal M^-_{\mathscr L_0(2)} v(x,t)=\lim_{\sigma\to 2}\mathcal  M^-_{\mathscr L_0(\sigma)} v(x,t)\\
\mathcal M^+_{\mathscr L_0(2)} v(x,t)=\lim_{\sigma\to 2}\mathcal  M^+_{\mathscr L_0(\sigma)} v(x,t).
 \end{split}
\]
It has been explained in \cite{CS10} that $\mathcal M^+_{\mathscr L_0(2)}$ is a second order uniformly elliptic operator, whose ellipticity constants $\tilde \lda$ and $\tilde \Lda$ depend only $\lda,\Lda$ and the dimension $n$. Moreover, $\mathcal M^+_{\mathscr L_0(2)} v\le \mathcal M^+(\nabla^2 v)$, where $\mathcal M^+(\nabla^2 v)$ is the second order Pucci operator with ellipticity constants $\tilde \lda$ and $\tilde \Lda$. Similarly, we also have corresponding relations for $ \mathcal M^-_{\mathscr L_0(2)}$.

For compactness arguments, we shall use the concept of the weak convergence of nonlocal operators, which can be found in Definition 5.1 in \cite{LD2} (Definition 41 in \cite{CS10} in the elliptic cases).

\begin{lem}\label{lem:appr0}
For some $\sigma\ge \sigma_0>0$ we consider nonlocal continuous operators $I_0$, $I_1$ and $I_2$ uniformly elliptic with respect to $\mathscr{L}_0(\sigma)$. Assume also that $I_0$ is translation invariant and $I_0 0=1$.

 Given $M_1>0$, a modulus of continuity $\rho$ and $\va>0$, there exist $\eta_1$ (small, independent of $\sigma$) and $R$ (large, independent of $\sigma$) so that if $u,v,I_0,I_1$ and $I_2$ satisfy
\[
\begin{split}
v_t-I_0 (v,x,t)&=0 \quad \mbox{in }B_1\times (-1,0],\\
u_t-I_1 (u,x,t)&\le \eta_1 \quad \mbox{in }B_1\times (-1,0],\\
u_t-I_2 (u,x,t)&\ge -\eta_1 \quad \mbox{in }B_1\times (-1,0],\\
\end{split}
\]
in viscosity sense, and
\[
\begin{split}
\|I_1-I_0\|_*&\le\eta_1\quad \mbox{in }B_1\times (-1,0],\\
\|I_2-I_0\|_*&\le\eta_1\quad \mbox{in }B_1\times (-1,0],\\
u&=v  \quad \mbox{in }((\R^n\setminus B_1)\times[-1,0])\cup(B_1\times\{t=-1\}),\\
|u|&\le M_1 \quad \mbox{in }\R^n\times[-1,0],\\
\end{split}
\]
and for every $(x,t)\in ((B_R\setminus B_1)\times (-1,0])\cup (B_R\times \{t=-1\})$  and $(y,s)\in ((\R^n\setminus B_1)\times (-1,0])\cup (\R^n\times \{t=-1\})$,
\[
|u(x,t)-u(y,s)|\le \rho(|x-y|\vee |t-s|),
\]
then $|u-v|\le\va$ in $B_1\times (-1,0]$.
\end{lem}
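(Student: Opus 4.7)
The plan is to argue by contradiction using a compactness argument. Suppose the conclusion fails: then there exist $\va_0>0$ and sequences $\eta_1^k\to 0$, $R_k\to\infty$, orders $\sigma_k\in[\sigma_0,2)$, operators $I_0^k,I_1^k,I_2^k$ uniformly elliptic with respect to $\mathscr{L}_0(\sigma_k)$ (with $I_0^k$ translation invariant and $I_0^k0=1$), and functions $u_k,v_k$ satisfying all the hypotheses but with $\|u_k-v_k\|_{L^\infty(B_1\times(-1,0])}>\va_0$. I aim to extract subsequential limits and show that the limiting functions must agree by uniqueness, giving a contradiction.

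First I would obtain compactness of $\{u_k\}$ and $\{v_k\}$. The uniform bound $|u_k|\le M_1$ together with the Krylov--Safonov-type interior H\"older estimates for parabolic nonlocal equations (Theorem 6.1 in \cite{LD1}, applied with right-hand side bounded by $\eta_1^k\le 1$) yields local equicontinuity of both families in $B_1\times(-1,0]$. The assumed boundary modulus of continuity $\rho$, valid on $(B_{R_k}\setminus B_1)\times(-1,0]\cup B_{R_k}\times\{t=-1\}$ against \emph{all} exterior points, both transfers equicontinuity up to the parabolic boundary of $B_1\times(-1,0]$ and, because $R_k\to\infty$, makes $\{u_k|_{\R^n\setminus B_1}\}$ equicontinuous on each compact set with uniformly controlled tails in the $L^1(\w)$ norm. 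Arzel\`a--Ascoli and a diagonal extraction give $u_k\to u_\infty$, $v_k\to v_\infty$ locally uniformly on $\R^n\times[-1,0]$, with convergence in $C(-1,0;L^1(\w))$, with $u_\infty=v_\infty$ on $((\R^n\setminus B_1)\times[-1,0])\cup(\R^n\times\{t=-1\})$, and with $\|u_\infty-v_\infty\|_{L^\infty(B_1\times(-1,0])}\ge\va_0$.

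Next I would extract weak limits of the operators. Up to a further subsequence $\sigma_k\to\sigma_\infty\in[\sigma_0,2]$, and by the weak convergence machinery for nonlocal operators (Definition 5.1 and the compactness results in \cite{LD2}, cf.\ Lemma 5 in \cite{CS10} in the elliptic case), $I_0^k$ converges weakly to a translation invariant operator $I_0^\infty$, which is uniformly elliptic with respect to $\mathscr{L}_0(\sigma_\infty)$ when $\sigma_\infty<2$ and is a translation invariant second-order uniformly elliptic operator in the limiting case $\sigma_\infty=2$ (with the ellipticity constants of $\mathcal M^\pm_{\mathscr L_0(2)}$ as recalled in the appendix). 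Similarly, $I_1^k,I_2^k$ have weak limits $I_1^\infty,I_2^\infty$. The crucial point is that the hypothesis $\|I_j^k-I_0^k\|_*\le\eta_1^k$ controls $|(I_j^k-I_0^k)\varphi|$ on any test function $\varphi$ that is globally bounded and quadratically touched at the contact point, which is exactly the class appearing in the definition of weak convergence for nonlocal operators. Passing $k\to\infty$ shows $I_1^\infty=I_2^\infty=I_0^\infty$.

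Finally I would invoke uniqueness. By the stability theorem for viscosity solutions under weak operator convergence (Theorem 5.3 in \cite{LD2}), $v_\infty$ solves $\pa_t v_\infty-I_0^\infty v_\infty=0$ in $B_1\times(-1,0]$. Since $\eta_1^k\to 0$, the same stability theorem applied to both inequalities shows that $u_\infty$ is simultaneously a viscosity sub- and super-solution of the same translation invariant equation, hence a viscosity solution. Together with the matching parabolic boundary data, the comparison principle for translation invariant nonlocal parabolic equations (Theorem 3.3 in \cite{LD1}, or its second-order analogue when $\sigma_\infty=2$) forces $u_\infty\equiv v_\infty$, contradicting $\|u_\infty-v_\infty\|\ge\va_0$. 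The main obstacle is the second step: showing that the weaker norm $\|\cdot\|_*$ is strong enough to identify the weak limits $I_1^\infty=I_2^\infty=I_0^\infty$ despite being defined only on very restricted test functions, and handling uniformly the possible degeneration $\sigma_k\to 2$ so that the comparison principle is available at the limit; the role of $R_k\to\infty$ with the modulus $\rho$ is precisely to deliver the $C(-1,0;L^1(\w))$-compactness of the exterior data needed to legitimately pass to the limit in the nonlocal equation.
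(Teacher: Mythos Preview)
Your proposal is correct and follows the same compactness--contradiction strategy as the paper, but there is one genuine difference in execution worth noting.

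You extract weak limits $I_1^\infty,I_2^\infty$ of $I_1^k,I_2^k$ and then argue that $\|I_j^k-I_0^k\|_*\to 0$ forces $I_1^\infty=I_2^\infty=I_0^\infty$, after which ordinary stability (Theorem 5.3 in \cite{LD2}) gives that $u_\infty$ solves the $I_0^\infty$ equation. The paper never extracts weak limits of $I_1^k,I_2^k$ at all. Instead it verifies directly that $u$ is a viscosity solution of $u_t-I_0 u=0$: given a parabolic test polynomial $p$ touching $u$ from below at $(x,t)$, it finds nearby touching points $(x_k,t_k)$ for $u_k$, forms the truncated test function $w_k$ (equal to $p+d_k$ in a small cylinder and to $u_k$ outside), and shows $I_2^{(k)}(w_k,x_k,t_k)\to I_0(w,x,t)$ by splitting
\[
|I_2^{(k)}(w_k)-I_0(w)|\le |I_2^{(k)}(w_k)-I_2^{(k)}(w)|+|I_2^{(k)}(w)-I_0^{(k)}(w)|+|I_0^{(k)}(w)-I_0(w)|.
\]
The middle term is controlled precisely by $\|I_2^{(k)}-I_0^{(k)}\|_*$, because $w$ is globally bounded and quadratic at the touching point, which is exactly the class of test functions appearing in the definition of $\|\cdot\|_*$. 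This sidesteps the obstacle you flag: there is no need to prove that $\|\cdot\|_*$-closeness identifies weak limits in general, only that it controls the operators on the specific truncated test functions arising in the viscosity verification. Your route should also go through, but the paper's is shorter and makes the role of the weaker norm $\|\cdot\|_*$ completely transparent.

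One minor point: for equicontinuity of $u_k,v_k$ up to $\overline{B_1}\times[-1,0]$ the paper invokes the boundary regularity Theorem~3.2 in \cite{LD2} rather than only interior H\"older estimates plus the exterior modulus $\rho$; this is the clean way to glue interior and exterior continuity. The comparison step at the end is Corollary~3.1 in \cite{LD2}.
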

\begin{proof}
 It follows from the proof of Theorem 5.6 in \cite{LD2} with modifications. But since the choice of norms are different, we include the proof for completeness. We argue by contradiction. Suppose the above lemma was false. Then there would be sequences $\sigma_k$, $I_0^{(k)}$, $I_1^{(k)}$, $I_2^{(k)}$, $\eta_k$, $u_k$, $v_k$ such that $\sigma_k\to \sigma\in [\sigma_0,2]$, $\eta_k\to 0$ and all the assumptions of the lemma are valid, but $\sup _{B_1\times (-1,0]}|u_k-v_k|\ge\va$.

Since $I_0^{(k)}$ is a sequence of uniformly elliptic translation invariant operators with respect to $\mathscr L(\sigma_k)$, by Theorem 5.5 in \cite{LD2} (and its proof) that we can take a subsequence, which is still denoted as $I_0^{(k)}$, that converges weakly to some nonlocal operator $I_0$, and $I_0$ is also translation invariant uniformly elliptic with respect to the class $\mathscr{L}_0(\sigma)$.

It follows from the boundary regularity Theorem 3.2 in \cite{LD2} that $u_k$ and $v_k$ have a modulus of continuity, uniform in $k$, in $\overline B_1\times [-1,0].$ Thus, $u_k$ and $v_k$ have a uniform (in $k$) modulus of continuity on $B_{R_k}\times [-1,0]$ with $R_k\to\infty$. We have subsequences of $\{u_k\}$ and $\{v_k\}$, which will be still denoted as $\{u_k\}$ and $\{v_k\}$, converge locally uniformly in $\R^n\times [-1,0]$ to $u$ and $v$, as well as in  $C(-1,0,L^1(\R^n,\omega))$ by dominated convergence theorem, respectively. Moreover, $u=v$ in $((\R^n\setminus B_1)\times[-1,0])\cup(B_1\times\{t=-1\})$, and $\sup_{B_1\times (-1,0)}|u-v|\ge\va$.

In the following, we are going to show in viscosity sense that
\be\label{eq:limit same}
 u_t-I_0(u,x,t)=0=v_t-I_0(v,x,t) \quad\mbox{in } B_1\times (-1,0].
\ee
Since $I_0$ translation invariant and $u=v$ in $((\R^n\setminus B_1)\times[-1,0])\cup(B_1\times\{t=-1\})$, we can conclude from Corollary 3.1 in \cite{LD2} that $u\equiv v$ in $B_1\times[-1,0]$, which is a contradiction.

The second equality of \eqref{eq:limit same} follows from Theorem 5.3 in \cite{LD2}.  To prove the first equality of \eqref{eq:limit same}, let $p$ be a second order parabolic polynomial touching $u$ from below at a point $(x,t)\in B_1\times (-1,0]$ in a neighborhood $B_r(x)\times (t-r,t]$. Since $u_k$ converges uniformly to $u$ in $\overline B_1\times [-1,0]$, for large $k$, we can find $(x_k,t_k)\in B_r(x)\times (t-r,t]$ and $d_k$ so that $p+d_k$ touch $u_k$ at $(x_k,t_k)$. Furthermore, $(x_k,t_k)\to (x,t)$ and $d_k\to 0$ as $k\to\infty$. Since $\pa_tu_k-I^{(k)}_2(u_k,x)\ge-\eta_k$, if we let
\[
  w_k(y,s)=\begin{cases}
  p+d_k\quad\mbox{in }B_r(x)\times (t-r,t];\\
   u_k\quad\mbox{in }(\R^n\setminus B_r(x))\times (B_r(x)\times \{s=t-r\}),
 \end{cases}
\]
we have $\pa_t  w_k(x_k,t_k)-I_2^{(k)}( w_k,x_k,t_k)\ge-\eta_k$, and
\[
 w=\lim_{k\to\infty}  w_k=\begin{cases}
  p\quad\mbox{in }B_r(x)\times (t-r,t];\\
   u\quad\mbox{in }(\R^n\setminus B_r(x))\times (B_r(x)\times \{s=t-r\}).
 \end{cases}
\]
Let $(z,s)\in B_{r/4}(x)\times (t-r/4, t]$. We have
\[
 \begin{split}
 & |I^{(k)}_2(  w_k,z,s)-I_0(  w,z,s)|\\
 &\le   |I^{(k)}_2( w_k,z,s)-I^{(k)}_2(  w,z,s)|+ |I^{(k)}_2(  w,z,s)-I_0( w,z,s)|\\
&\le \sup_{L\in\mathscr L(\sigma_k)} |L( w_k-  w)(z,s)|+|I^{(k)}_2(  w,z,s)-I_0(  w,z,s)|\\
&\le \int_{\R^n\setminus B_{r/2}}\frac{2\Lda|\delta (  w_k-  w)(z,y,s)|}{|y|^{n+\sigma_{k}}}\ud y+|I^{(k)}_2(  w,z,s)-I^{(k)}_0(  w,z,s)|\\
&\quad\quad+|I^{(k)}_0(  w,z,s)-I_0(  w,z,s)|.
 \end{split}
\]
Since $u_k$ are uniformly bounded in $\R^n\times[0,1]$, by dominated convergence theorem, the first term goes to $0$ as $k\to\infty$. Moreover, the convergence is uniform in $(z,s)$. Meanwhile, since $\|I^{(k)}_2-I^{(k)}_0\|_*\to 0$ in $B_1\times (-1,0]$ and $  w$ is bounded, we have that the second goes to $0$ uniformly for $(z,s)\in B_{r/4}(x)\times (t-r/4, t]$. Since  $I^{(k)}_0$ converges weakly to $I_0$, the third term also goes to zero uniformly for $(z,s)\in B_{r/4}(x)\times (t-r/4, t]$. Therefore, $I^{(k)}_2(  w_k,z,s)\to I_0(  w,z,s)$ uniformly in $(z,s)\in B_{r/4}(x)\times (t-r/4, t]$. Since $I_0w$ is continuous in $B_r(x)$, we can compute that
\[
\begin{split}
& |I^{(k)}_2(  w_k,x_k, t_k)-I_0(  w,x,t)|\\
 &\quad\le |I^{(k)}_2(  w_k,x_k,t_k)-I_0(  w,x_k,t_k)|+|I_0(  w,x_k,t_k)-I_0(  w,x,t)| \to 0
 \end{split}
\]
as $k\to\infty$. Since $\pa_t   w_k(x_k,t_k)-I_2^{(k)}(   w_k,x_k,t_k)\ge-\eta_k$ and $\pa_t   w_k(x_k,t_k)\to \pa_t  w(x,t)$, it follows that that $w_t(x,t)-I_0(w,x,t)\ge0$. Thus, $u_t(x,t)-I_0(u,x,t)\ge 0$ in viscosity sense. Similarly, we can show that $u_t(x,t)-I_0(u,x,t)\le 0$ in viscosity sense. This finishes the proof of the first equality of \eqref{eq:limit same}.
\end{proof}

\begin{lem}\label{lem:appr}
For some $\sigma\ge \sigma_0>0$ we consider nonlocal continuous operators $I_0$, $I_1$ and $I_2$ uniformly elliptic with respect to $\mathscr{L}_0(\sigma)$. Assume also that $I_0$ is translation invariant and $I_0 0=0$.

Given $M_2, M_3>0$, $\beta\in (0,1)$, and $\va>0$, there exists $\eta_2$ (small) so that if $u,v,I_0,I_1$ and $I_2$ satisfy
\[
\begin{split}
v_t-I_0 (v,x,t)&=0 \quad \mbox{in }B_1\times (-1,0],\\
u_t-I_1 (u,x,t)&\le \eta_2 \quad \mbox{in }B_1\times (-1,0],\\
u_t-I_2 (u,x,t)&\ge -\eta_2 \quad \mbox{in }B_1\times (-1,0],\\
\end{split}
\]
in viscosity sense, and
\[
\begin{split}
\|I_1-I_0\|_*&\le\eta_2\quad \mbox{in }B_1\times (-1,0],\\
\|I_2-I_0\|_*&\le\eta_2\quad \mbox{in }B_1\times (-1,0],\\
u&=v  \quad \mbox{in }((\R^n\setminus B_1)\times[-1,0])\cup(B_1\times\{t=-1\}),\\
|u|&\le M_2 \quad \mbox{in }\R^n\times[-1,0],\\
u&\equiv 0 \quad \mbox{in }(\R^n\setminus B_{2})\times [-1,0],\\
[u]_{C^\beta(B_{2-\tau}\times[-1,0])}&\le M_3\tau^{-4}\mbox{ for all } \tau\in (0,1),\\
\end{split}
\]
then $|u-v|\le\va$ in $B_1$.
\end{lem}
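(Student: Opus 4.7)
The plan is to argue by contradiction, running the same compactness scheme used in the proof of Lemma \ref{lem:appr0}. Suppose the claim fails. Then for some fixed $M_2, M_3, \beta, \va$ we may extract sequences $\sigma_k \to \sigma \in [\sigma_0, 2]$, $\eta_k \to 0$, operators $I_0^{(k)}, I_1^{(k)}, I_2^{(k)}$ uniformly elliptic with respect to $\mathscr L_0(\sigma_k)$ (with $I_0^{(k)}$ translation invariant and $I_0^{(k)} 0 = 0$), together with functions $u_k, v_k$ satisfying all the hypotheses, yet $\sup_{B_1 \times (-1, 0]} |u_k - v_k| \ge \va$. By Theorem 5.5 in \cite{LD2}, along a subsequence $I_0^{(k)}$ converges weakly to some translation invariant operator $I_0$ uniformly elliptic with respect to $\mathscr L_0(\sigma)$.

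The compactness step is the main novelty relative to Lemma \ref{lem:appr0}. Because $[u_k]_{C^\beta(B_{2-\tau} \times [-1, 0])} \le M_3 \tau^{-4}$ is uniform in $k$ for every fixed $\tau \in (0, 1)$, and $|u_k| \le M_2$, an Arzel\`a--Ascoli and diagonal extraction yields a subsequence converging locally uniformly on $B_2 \times [-1, 0]$; combined with $u_k \equiv 0$ on $\R^n \setminus B_2$ and $|u_k - u| \le 2 M_2 \mathbf{1}_{B_2}$, dominated convergence gives $u_k(\cdot, t) \to u(\cdot, t)$ in $L^1(\R^n, \w)$ for every $t$. The parabolic boundary data for $v_k$ is $u_k$, and since $\overline{B_1} \subset B_{2-1}$ we have $[u_k]_{C^\beta(\overline{B_1} \times [-1, 0])} \le M_3$ uniformly in $k$. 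Theorem 3.2 in \cite{LD2} then provides a modulus of continuity for $v_k$ on $\overline{B_1} \times [-1, 0]$ that is uniform in $k$, so a further subsequence of $v_k$ converges locally uniformly on $\R^n \times [-1, 0]$ and in $C(-1, 0; L^1(\w))$ to some $v$; in the limit $u = v$ on the parabolic boundary of $B_1$ and $\sup_{B_1 \times (-1, 0]} |u - v| \ge \va$.

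Once the compactness is in place, I would pass to the limit in the equations exactly as in Lemma \ref{lem:appr0}. For a second-order parabolic polynomial $p$ touching $u$ from below at an interior point $(x, t) \in B_1 \times (-1, 0]$, construct the spliced function $w_k$ equal to $p + d_k$ on a small parabolic cylinder and to $u_k$ outside, apply the viscosity inequality $\pa_t w_k(x_k, t_k) - I_2^{(k)}(w_k, x_k, t_k) \ge -\eta_k$ at approximating contact points, and send $k \to \infty$ using the splitting
\[
I_2^{(k)}(w_k) - I_0(w) = \bigl(I_2^{(k)}(w_k) - I_2^{(k)}(w)\bigr) + \bigl(I_2^{(k)}(w) - I_0^{(k)}(w)\bigr) + \bigl(I_0^{(k)}(w) - I_0(w)\bigr),
\]
whose three terms vanish respectively by the $L^1(\w)$ convergence $u_k \to u$ (hence $w_k \to w$ in $L^1(\w)$), the smallness $\|I_2^{(k)} - I_0^{(k)}\|_* \le \eta_k \to 0$, and the weak convergence $I_0^{(k)} \rightharpoonup I_0$. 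The reverse viscosity inequality is handled symmetrically using $I_1^{(k)}$, so $u_t - I_0(u, x, t) = 0$ in $B_1 \times (-1, 0]$ in the viscosity sense; Theorem 5.3 in \cite{LD2} gives the same for $v$.

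With $u, v$ solving the same translation invariant equation and agreeing on the parabolic boundary of $B_1$, Corollary 3.1 in \cite{LD2} (comparison) forces $u \equiv v$ in $B_1 \times [-1, 0]$, contradicting $\sup_{B_1 \times (-1, 0]} |u - v| \ge \va$. The main obstacle is the compactness step: one must verify that the Hölder bounds with a $\tau^{-4}$ blow-up rate, combined with compact support of $u_k$, still produce strong enough convergence (locally uniform plus $L^1(\w)$) to pass to the limit in the nonlocal operators, and that the $u_k$ indeed furnish a uniform boundary modulus for the Dirichlet problem solved by $v_k$.
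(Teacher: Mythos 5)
Your proof is correct and follows the same compactness-by-contradiction scheme the paper uses: the uniform H\"older bound with $\tau^{-4}$ blow-up plus compact support of $u_k$ gives locally uniform and $C(-1,0;L^1(\omega))$ convergence, Theorem 3.2 in \cite{LD2} supplies a uniform modulus for $v_k$ from the boundary data, the limit equation is obtained exactly as in Lemma \ref{lem:appr0}, and the comparison principle (Corollary 3.1 in \cite{LD2}) closes the contradiction. One tiny slip: you write $\overline{B_1}\subset B_{2-1}$, which is false as stated; take instead $\tau=1/2$, giving $[u_k]_{C^\beta(B_{3/2}\times[-1,0])}\le 16 M_3$, which matches the $B_{3/2}$ neighborhood used in the paper.
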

\begin{proof}
This lemma can be proved similarly to Lemma \ref{lem:appr0}. Suppose the above lemma was false. Then there would be sequences $\sigma_k$, $I_0^{(k)}$, $I_1^{(k)}$, $I_2^{(k)}$, $\eta_k$, $u_k$, $v_k$ such that $\sigma_k\to\sigma\in [\sigma_0,2]$, $\eta_k\to 0$ and all the assumptions of the lemma are valid, but $\sup _{B_1}|u_k-v_k|\ge\va$.

 Since $I_0^{(k)}$ is a sequence of uniformly elliptic operators, we can take a subsequence, which is still denoted as $I_0^{(k)}$, that converges weakly to some nonlocal operator $I_0$, and $I_0$ is also translation invariant and elliptic with respect to the class $\mathscr{L}_0(\sigma)$.

By our assumptions, it is clear that, up to a subsequence, $u_k$ converges locally uniformly in $B_2\times [-1,0]$. Since $u_k\equiv 0$ in $(\R^n\setminus B_2)\times [-1,0]$, it converges almost everywhere to some function $u$ in $\R^n\times [-1,0]$. By dominated convergence theorem, $u_k$ converges to $u$ in $C(-1,0;L^1(\R^n,\omega))$. Since $v_k$ is bounded and has a fixed modulus continuity on $((B_{3/2}\setminus B_1)\times [-1,0])\cup B_1\times \{t=-1\}$, then by Theorem 3.2 in \cite{LD2}, there is another modulus continuity that extends to $\overline B_1\times [-1,0]$. Hence, $v_k$ converges uniformly in $\overline B_{3/2}\times [-1,0]$, and thus, converges to some function $v$ almost everywhere in $\R^n\times [-1,0]$. Moreover, $u=v$ in $((\R^n\setminus B_1)\times[-1,0])\cup(B_1\times\{t=-1\})$, and $\sup_{B_1\times (0,1)}|u-v|\ge\va$.

It follows from the proof of \eqref{eq:limit same} that $u$ and $v$ solve the same equation $ u_t-I_0(u,x,t)=0=v_t-I_0(v,x,t)$ in  $B_1\times (-1,0]$. Then $u=v$, which is a contradiction.
\end{proof}

\bigskip

\bigskip

\noindent Tianling Jin

\noindent Department of Mathematics, The University of Chicago, 5734 S. University Ave, Chicago, IL 60637, USA\\[1mm]
Email: \textsf{tj@math.uchicago.edu}

\bigskip

\noindent Jingang Xiong

\noindent Beijing International Center for Mathematical Research, Peking University, Beijing 100871, China\\[1mm]
Email: \textsf{jxiong@math.pku.edu.cn}

\end{document}